\documentclass[11pt]{amsart}
\usepackage[a4paper,margin=2.5cm]{geometry}
\usepackage{amsmath,amssymb,amsthm,mathtools,enumitem,hyperref}
\hypersetup{hidelinks}
\newtheorem{theorem}{Theorem}[section]
\newtheorem{proposition}[theorem]{Proposition}
\newtheorem{lemma}[theorem]{Lemma}
\newtheorem{corollary}[theorem]{Corollary}
\newtheorem{conjecture}[theorem]{Conjecture}
\theoremstyle{remark}\newtheorem{remark}[theorem]{Remark}
\numberwithin{equation}{section}
\newcommand{\norm}[1]{\left\lVert#1\right\rVert}
\newcommand{\R}{\mathbb R}\newcommand{\C}{\mathbb C}
\newcommand{\Z}{\mathbb Z}\newcommand{\N}{\mathbb N}
\DeclareMathOperator{\spr}{\rho}
\title[Mean size of Daubechies wavelet packets]{Mean size and Schauder-basis properties\\of Daubechies wavelet packets}
\author[M.\ Nielsen]{Morten Nielsen}
\address{Department of Mathematical Sciences\\ Aalborg University\\
Thomas Manns Vej 23\\ DK-9220 Aalborg East\\ Denmark}
\email{mnielsen@math.aau.dk}
\subjclass[2020]{Primary 42C40; Secondary 46B15, 37D35, 15A18}
\keywords{Daubechies wavelet packets, Schauder basis, mean size,
$p$-norm joint spectral radius, matrix pressure,
constant spectral radius}
\date{}
\begin{document}
\begin{abstract}
In 2002 Nielsen and Zhou conjectured that the shifted wavelet packet
system associated with a Daubechies filter of length at least four
fails to be a Schauder basis of $L^p(\R)$ for $1\le p\le\infty$,
$p\ne2$, and that the packets are not uniformly bounded in $p$-mean
across scales for any $p>2$. The basis obstruction throughout the range
$1<p<\infty$, $p\ne2$, was previously established for length four,
via an explicit computation; the full $p$-mean assertion remained open. We prove both for every Daubechies filter of length at least
four: the high-pass transition matrices satisfy $\rho_p\rho_{p'}>2$ for
$1<p<\infty$, $p\ne2$, and for $p>2$ the full family of transition
matrices has $p$-norm joint spectral radius exceeding the Haar value
$4^{1/p}$. The key step is structural: an affine piece of the matrix
pressure forces constant spectral radius, which by a theorem of
Protasov and Voynov forces simultaneous orthogonality of the transition
matrices, incompatible with the double zero of the high-pass symbol.
The results extend to every real spectral factor with the Daubechies
magnitude response, including the least-asymmetric filters.
\end{abstract}
\maketitle

\section{Introduction}\label{sec:intro}

\subsection{Wavelet packets}
Let $m\ge2$, $d=2m-1$, and let $a=(a_0,\ldots,a_d)$ be the real
Daubechies low-pass filter of length $2m$ with $m$ vanishing moments
\cite{D88}, normalized by
\begin{equation}\label{eq:lowpass}
 \sum_k a_k=2,\qquad \sum_k a_k a_{k+2j}=2\delta_{j0}.
\end{equation}
Coefficients outside $\{0,\ldots,d\}$ are zero. Let $\phi$ be its
scaling function, so that
\[
 \phi(x)=\sum_k a_k\phi(2x-k).
\]
We use the standard facts that $\phi$ is bounded and compactly
supported and that its integer translates are orthonormal
\cite{D88}. Set $b_k=(-1)^k a_{1-k}$, and define
\begin{equation}\label{eq:recursion}
 w_0=\phi,\qquad
 w_{2j}(x)=\sum_k a_k w_j(2x-k),\qquad
 w_{2j+1}(x)=\sum_k b_k w_j(2x-k),\quad j\ge0.
\end{equation}
For $j=0$, the even recursion reproduces the refinement equation for
$w_0=\phi$. The shifted packet system
\[
 \mathcal W=\{w_j(\cdot-k):j\ge0,\ k\in\Z\}
\]
is an orthonormal basis of $L^2(\R)$ \cite{NZ}. Every packet is
bounded and supported in an interval of length at most $d$ (the
support lengths $\ell_n$ at depth $n$ satisfy $\ell_0=d$ and
$\ell_{n+1}\le(\ell_n+d)/2$), hence belongs to every $L^r(\R)$,
$1\le r\le\infty$. At depth $n$ the recursion produces the $2^n$
functions $w_0,\ldots,w_{2^n-1}$; the all-high-pass branch is
$w_{2^n-1}$.

Put $h_k=(-1)^ka_k$, with $h_k=0$ outside $\{0,\ldots,d\}$, and define
the $d\times d$ high-pass transition matrices
\begin{equation}\label{eq:matrices}
 (B_\eta)_{\alpha\beta}=h_{\eta+2\alpha-\beta},
 \qquad 0\leq\alpha,\beta\leq d-1,\quad\eta=0,1,
 \qquad \mathcal B=(B_0,B_1).
\end{equation}
This is the pair $C^1$ of \cite[Section~3.1 and Example~2]{NZ}, with the
index convention $\eta+2\alpha-\beta$ of \cite[(2.4)]{NZ}; the mask $h$
is a reflected translate of $b$ up to sign, see Section~\ref{sec:transfer}.
Let $\mathcal T$ denote the family of four transition matrices of the
low- and high-pass masks, defined in \eqref{eq:fourfamily} below.

\subsection{Mean size and the conjecture}
For a finite family $\mathcal A=(A_1,\ldots,A_\ell)$ of square matrices
and $1\le p<\infty$, the $p$-norm joint spectral radius is
\[
 \rho_p(\mathcal A)=\lim_{n\to\infty}
 \Bigl(\sum_{|I|=n}\norm{A_I}^p\Bigr)^{1/(np)},
\]
where $A_I=A_{i_n}\cdots A_{i_1}$ for $I=i_1\cdots i_n$. Nielsen and
Zhou \cite[Theorem~2 and Corollary~3.1]{NZ} expressed the growth of
wavelet packets through such radii: for $1<p<\infty$,
\begin{align}
 \lim_{n\to\infty}\norm{w_{2^n-1}}_p^{1/n}
 &=2^{-1/p}\rho_p(\mathcal B),\label{eq:NZ-branch}\\
 M_p:=\lim_{n\to\infty}
 \Bigl(\sum_{j=0}^{2^n-1}\norm{w_j}_p^p\Bigr)^{1/(np)}
 &=2^{-1/p}\rho_p(\mathcal T).\label{eq:NZ-mean}
\end{align}
The quantity $M_p$ is the \emph{mean size} of the packet tree. For the
Haar filter every packet is a Walsh function of modulus one on $[0,1]$,
and $\rho_p(\mathcal T)=4^{1/p}$ \cite[Example~1]{NZ}; the Haar packets
are thus uniformly bounded in $p$-mean. By a duality argument,
\cite[Lemma~3.1 and Remark~3.3]{NZ} showed that if $\mathcal W$ is a
Schauder basis of $L^p(\R)$, then
\begin{equation}\label{eq:NZ-condition}
 \rho_p(\mathcal B)\rho_{p'}(\mathcal B)=2,\qquad \frac1p+\frac1{p'}=1,
\end{equation}
while $\rho_p(\mathcal B)\rho_{p'}(\mathcal B)\ge2$ always holds.
Nielsen and Zhou stated the following conjecture
\cite[Remark~3.4]{NZ}, quoted verbatim.

\begin{conjecture}[Nielsen--Zhou, 2002]\label{conj:NZ}
The basic wavelet packets associated with a Daubechies filter of length
at least $4$ will fail to be a Schauder basis for $L^p$ when $p\ne2$,
and the functions will not be uniformly bounded in $p$-mean across
scales for any $p>2$.
\end{conjecture}

We make the second assertion precise as unboundedness of the
normalized level means
\begin{equation}\label{eq:level-mean}
 \mu_{n,p}:=\Bigl(2^{-n}\sum_{j=0}^{2^n-1}\norm{w_j}_p^p\Bigr)^{1/p},
 \qquad n\ge0.
\end{equation}
For Haar, $\mu_{n,p}=1$ for all $n$. Unboundedness of $\mu_{n,p}$ implies
the weaker statement $\sup_j\norm{w_j}_p=\infty$, so our result covers
either reading. Note that $M_p$ is the growth rate of the unnormalized
level sum; by \eqref{eq:NZ-mean},
$\mu_{n,p}^{1/n}\to2^{-1/p}M_p=4^{-1/p}\rho_p(\mathcal T)$, and the
second assertion follows from $\rho_p(\mathcal T)>4^{1/p}$.

\subsection{Main result}
The following theorem resolves both assertions of the conjecture.

\begin{theorem}\label{thm:main}
For every Daubechies filter of length $2m$, $m\ge2$, the following hold.
\begin{enumerate}[label=(\roman*)]
\item $\rho_p(\mathcal B)\rho_{p'}(\mathcal B)>2$ for every
$1<p<\infty$, $p\ne2$. Moreover, no enumeration of $\mathcal W$ is a
Schauder basis of $L^p(\R)$ for $1\le p<\infty$, $p\ne2$.
\item For every $2<p<\infty$, the limit
\[
 \gamma_{m,p}:=\lim_{n\to\infty}\norm{w_{2^n-1}}_p^{1/n}
 =2^{-1/p}\rho_p(\mathcal B)
\]
exists and is strictly greater than one.
\item For every $2<p<\infty$, the limit
\[
 \Gamma_{m,p}:=\lim_{n\to\infty}\mu_{n,p}^{1/n}
 =4^{-1/p}\rho_p(\mathcal T)
\]
exists and is strictly greater than one; equivalently,
$M_p>2^{1/p}$.
\end{enumerate}
\end{theorem}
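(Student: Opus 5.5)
The plan is to work with the matrix pressure $P_{\mathcal A}(s)=\lim_n \frac1n\log\sum_{|I|=n}\norm{A_I}^s$ for $\mathcal A=\mathcal B$ and $\mathcal A=\mathcal T$. Since $\rho_p(\mathcal A)=\exp(P_{\mathcal A}(p)/p)$, both assertions become statements about this function. It is convex in $s$, because it is a limit of logarithms of sums of exponentials of $s\log\norm{A_I}$.

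For (i) I would first compute the anchor values. Orthonormality of $\mathcal W$ and the identities \eqref{eq:lowpass} give $\sum_{|I|=n}\norm{B_I}_F^2\asymp 2^n$ on the invariant subspace carrying the packets, so $P_{\mathcal B}(2)=\log 2$. Next, $\rho_p\rho_{p'}=2$ translates into $P(p)/p+P(p')/p'=\log 2$. Put $f(t)=tP(1/t)$ for $t=1/p\in(0,1)$. This is the perspective of a convex function, so $f$ is convex, and $f(1/2)=\tfrac12\log 2$. Equality at some $t_0\neq 1/2$ means $f(t_0)+f(1-t_0)=2f(1/2)$. By convexity this forces $f$ to be affine on $[t_0,1-t_0]$, and hence $P$ to be affine on an interval around $s=2$. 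The general inequality $\rho_p\rho_{p'}\ge2$ from \cite{NZ} is exactly the convexity half of this argument, so only the equality case needs work.

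The key structural step, and the main obstacle, is to show that an affine piece of $P_{\mathcal B}$ forces the family to have constant spectral radius. Concretely, I would want $\rho(B_I)^{1/|I|}$ to be the same for every word $I$ (after normalisation). My first attempt would use the thermodynamic formalism for matrix cocycles. For an irreducible family the pressure is differentiable, and the slope $P'(s)$ equals the Lyapunov exponent $\chi(\mu_s)$ of an equilibrium state $\mu_s$. If $P$ is affine on an interval, then the $\mu_s$ for different $s$ share the same exponent and are equilibrium states simultaneously. This forces the potential $\log\norm{A_I}$ to be cohomologous to a constant. Evaluating on periodic orbits then gives $\rho(A_I)=c^{|I|}$ for all $I$. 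Reducible families need extra care: I would block-triangularise $\mathcal B$ and observe that the pressure is the maximum of the pressures of the diagonal blocks, so an affine piece passes to one block. Establishing irreducibility of that block, or at least quasi-irreducibility, is where I expect the most difficulty.

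Given constant spectral radius, the Protasov--Voynov theorem says the (irreducible) family is simultaneously similar to orthogonal matrices times a scalar. In particular $\norm{B_I}$ is comparable to $c^{|I|}$, and every product has all eigenvalues of modulus $c$. Comparing with $P(2)=\log 2$ gives $c=\sqrt2$ in this normalisation. The contradiction should come from the double zero of the high-pass symbol $H(\xi)=\sum h_k e^{-ik\xi}$ at $\xi=0$, which holds for $m\ge2$. It produces an explicit left eigenvector: the moment functionals $(\alpha^j)_\alpha$ with $j=0,1$ are common eigenvectors of $B_0$ and $B_1$ with eigenvalues $2^{-j}$ times a fixed constant. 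These eigenvalues have distinct moduli, which contradicts the requirement that all eigenvalues have equal modulus.

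The Schauder statement then follows from \cite[Lemma~3.1]{NZ}. For $p=1$, and for arbitrary enumerations, I would note that the duality argument of \cite{NZ} only uses uniform boundedness of the partial-sum projections onto the depth-$n$ blocks. Every enumeration contains such blocks up to finite perturbation. For $p=1$ I would argue directly that $\sup_n\norm{w_{2^n-1}}_1\norm{w_{2^n-1}}_\infty=\infty$, using $\gamma_{m,\infty}>1$.

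For (ii), existence of the limit is \eqref{eq:NZ-branch}. Strict inequality means $P_{\mathcal B}(p)>\log 2$ for $p>2$. I would set $P(0)=\log 2$, since $\mathcal B$ has two matrices and products do not vanish. If $P(p)\le\log 2$ for some $p>2$, then convexity together with $P(0)=P(2)=\log 2$ forces $P$ to be constant, and hence affine, on $[0,p]$. The structural step above then gives a contradiction. Part (iii) is handled in the same way with $\mathcal T$. Here $P_{\mathcal T}(0)=\log4$, and orthonormality gives $P_{\mathcal T}(2)=\log 4$, so $\rho_p(\mathcal T)\le4^{1/p}$ would again force $P_{\mathcal T}$ to be affine. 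Constant spectral radius for $\mathcal T$ restricts to its high-pass subfamily $\mathcal B$, and the same eigenvalue contradiction applies.
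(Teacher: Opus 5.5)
Your overall architecture matches the paper's: convex pressure, the anchor $P(2)=\log2$, then affine piece $\Rightarrow$ constant spectral radius $\Rightarrow$ Protasov--Voynov $\Rightarrow$ contradiction with the double zero. Your convexity reductions for (i)--(iii) are fine, including the perspective-function argument and the one using $P(0)=\log2$.

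\textbf{The contradiction step is false.} The zeros of $H$ at $z=1$ are vanishing moments of $h$, not sum rules. It is sum rules (zeros at $z=-1$) that make polynomial sequences invariant under transposed transition matrices with eigenvalues $2^{-j}$, and here $H(-1)=A(1)=2$. Concretely, the even and odd coefficient sums of $h$ are $1$ and $-1$. Hence $\mathbf 1^{\mathsf T}B_\eta=(-1)^\eta\bigl((-1)^\beta\bigr)_\beta^{\mathsf T}$, which is not a multiple of $\mathbf 1^{\mathsf T}$. More fundamentally, $B_0,B_1$ cannot have any common eigenvector, because the pair is irreducible, and you need irreducibility anyway to invoke Protasov--Voynov.

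The eigenvalues $1,\tfrac12$ you have in mind belong to the low-pass matrices $L_0,L_1$, since $A$ has its zero of order $m\ge2$ at $-1$. That could rescue (iii), where $L_0\in\mathcal T$ is forced to be orthogonal in a suitable basis. It cannot rescue (i) or (ii), which involve only $\mathcal B$.

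The paper instead uses the endpoint structure. Since $e_0^{\mathsf T}B_0=h_0e_0^{\mathsf T}$ and $e_{d-1}^{\mathsf T}B_1=h_de_{d-1}^{\mathsf T}$, modulus-one eigenvalues give $|h_0|=|h_d|=1$. The energy identity $\sum_kh_k^2=2$ then kills every interior coefficient, and $H(z)=h_0+h_dz^d$ has $H'(1)=dh_d\ne0$. Note also that $P(2)=\log2$ forces the common spectral radius to be $1$, not $\sqrt2$.

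\textbf{Irreducibility is left open.} Both the Feng--K\"aenm\"aki uniqueness and Gibbs property (needed to get $\spr(B_I)=e^{\beta|I|}$ from an affine piece) and Protasov--Voynov require it. Your block-triangular fallback does not close the argument: you would only learn that some diagonal block has constant spectral radius, and nothing places the relevant eigenvalues of $B_0,B_1$ in that block.

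The paper proves irreducibility (Lemma~\ref{lem:irreducible}) from two facts: coprimality of $H(z)$ and $H(-z)$, which follows from the QMF identity, and $1$ being the only unit-circle zero of $H$. A common invariant subspace of $B_0^{\mathsf T},B_1^{\mathsf T}$ contains vectors $(1,t,\ldots,t^{d-1})$ for a nonempty finite set of $t$. The square-root identities force this set to be squaring-invariant and to consist of unimodular points $z$ with $H(-z)=0$, i.e.\ $\{-1\}$, which is not squaring-invariant.

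\textbf{Two smaller points.} For arbitrary enumerations, it suffices that the rank-one projection $f\mapsto\langle f,e_j\rangle e_j$, of norm $\norm{e_j}_p\norm{e_j}_{p'}$, is a difference of two consecutive partial-sum projections. Your claim that every enumeration contains the depth-$n$ blocks is neither true nor needed. For $p=1$, $\gamma_{m,\infty}>1$ alone does not force $\norm{w}_1\norm{w}_\infty\to\infty$. The inequality $\norm{f}_r\norm{f}_{r'}\le\norm{f}_1\norm{f}_\infty$, combined with (i) at some $r\in(1,2)$, does.
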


\begin{corollary}\label{cor:conjecture}
Conjecture~\ref{conj:NZ} holds for every Daubechies filter of length
at least four.
\end{corollary}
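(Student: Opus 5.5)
The statement to prove is Corollary~\ref{cor:conjecture}, and the plan is to read it off from Theorem~\ref{thm:main}, taking each clause of Conjecture~\ref{conj:NZ} in turn. Throughout, fix a Daubechies filter of length $2m\ge4$, i.e.\ $m\ge2$.

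\emph{Basis assertion.} The conjecture claims that $\mathcal W$ fails to be a Schauder basis of $L^p$ for $p\ne2$.
\begin{itemize}
\item[(a)] For $1<p<\infty$, $p\ne2$, Theorem~\ref{thm:main}(i) gives $\rho_p(\mathcal B)\rho_{p'}(\mathcal B)>2$. This contradicts the necessary condition \eqref{eq:NZ-condition} of \cite[Lemma~3.1]{NZ}.
\item[(b)] Theorem~\ref{thm:main}(i) also states directly that no enumeration of $\mathcal W$ is a Schauder basis for $1\le p<\infty$, $p\ne2$. This covers $p=1$.
\item[(c)] For $p=\infty$ the conclusion is trivial. $L^\infty(\R)$ is nonseparable, so it has no countable Schauder basis at all, and $\mathcal W$ is countable.
\end{itemize}
Since (b) already allows arbitrary enumerations, the conclusion does not depend on how the packets are ordered.

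\emph{Mean-size assertion.} The conjecture claims that the packets are not uniformly bounded in $p$-mean for $p>2$.
\begin{itemize}
\item[(a)] Fix $2<p<\infty$. Theorem~\ref{thm:main}(iii) gives $\mu_{n,p}^{1/n}\to\Gamma_{m,p}>1$.
\item[(b)] Choose $1<\Gamma'<\Gamma_{m,p}$. Then $\mu_{n,p}\ge(\Gamma')^n$ for all large $n$, so $\mu_{n,p}$ grows exponentially and is unbounded.
\item[(c)] Since $\mu_{n,p}^p$ is an average of the $\norm{w_j}_p^p$ over $0\le j<2^n$, we have $\mu_{n,p}\le\max_{j<2^n}\norm{w_j}_p$. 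Hence $\sup_j\norm{w_j}_p=\infty$ as well, which handles the weaker reading of ``uniformly bounded''.
\item[(d)] Alternatively, Theorem~\ref{thm:main}(ii) gives the same conclusion along the single branch $w_{2^n-1}$.
\end{itemize}
The case $p=\infty$ needs a separate argument, because $\rho_p$ is only defined for finite $p$. Take any $p\in(2,\infty)$. Each $w_j$ is supported in an interval of length at most $d$, so $\norm{w_j}_p\le d^{1/p}\norm{w_j}_\infty$. Unboundedness of the $L^p$ norms therefore forces $\sup_j\norm{w_j}_\infty=\infty$.

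\emph{Expected obstacle.} There is no real obstacle at the level of the corollary; all the substance is in Theorem~\ref{thm:main}. The only steps needing care are the two endpoints. For $p=\infty$ one uses nonseparability for the basis claim and the support comparison above for the mean-size claim. For $p=1$ one relies on the ``moreover'' clause of (i), not on \eqref{eq:NZ-condition}, since that condition is stated only for $1<p<\infty$.
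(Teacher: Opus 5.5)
Your proposal is correct and follows the paper's proof. The basis clause comes from Theorem~\ref{thm:main}(i) together with nonseparability of $L^\infty(\R)$, the mean-size clause from Theorem~\ref{thm:main}(iii), and the $L^\infty$ endpoint from the support bound $\norm{w_j}_r\le d^{1/r}\norm{w_j}_\infty$ for a fixed $r\in(2,\infty)$; your step (a) via \eqref{eq:NZ-condition} is redundant (and would inherit the density hypothesis of \cite[Lemma~3.1]{NZ}), since the enumeration-free statement you use in (b) already covers all $1\le p<\infty$, $p\ne2$.
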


\begin{proof}
The first assertion is Theorem~\ref{thm:main}(i); the $L^\infty$ case
is automatic because $L^\infty(\R)$ is nonseparable and hence has no
Schauder basis. By Theorem~\ref{thm:main}(iii),
$\mu_{n,p}\to\infty$ exponentially for every $2<p<\infty$.
For the endpoint interpretation, define
\[
 \mu_{n,\infty}:=\max_{0\le j<2^n}\norm{w_j}_\infty.
\]
Fix any $r\in(2,\infty)$. The support bound gives
$\norm{w_j}_r\le d^{1/r}\norm{w_j}_\infty$, hence
\[
 \mu_{n,\infty}\ge d^{-1/r}\mu_{n,r}.
\]
Thus the level maxima also grow at least exponentially; in particular,
$\sup_j\norm{w_j}_\infty=\infty$.
\end{proof}

Part~(i) is formulated without the density hypothesis of
\cite[Lemma~3.1]{NZ}, and for every enumeration of $\mathcal W$. The
rates may depend on $m$ and $p$; no uniform positive gap is asserted as
$m\to\infty$ or $p\downarrow2$. Part~(iii) is proved using the full
family $\mathcal T$, rather than inferred from a single large packet.

\subsection{Previous results}
Size properties of wavelet packets were first studied through their
frequency localization. Coifman, Meyer and Wickerhauser \cite{CMW}
showed that the level averages of $\|\widehat{w_j}\|_{L^1}$ blow up;
since the Meyer low-pass filter is nonnegative, this shows that the
Meyer wavelet packets are not uniformly bounded in $L^p(\R)$ for large
$p$. Fan \cite{Fan} determined the exponential rate of these averages,
and S\'er\'e \cite{Sere} showed that the all-high-pass branch
$w_{2^n-1}$ is in general the worst localized. Saliani \cite{Saliani}
studied the extent of wavelet packet orthonormal bases in $L^2(\R)$,
answering negatively a question of \cite{CMW} on exceptional sets of
Lebesgue measure zero.

For finite filters the positivity argument is not available. Direct
$L^p$ estimates were obtained in \cite{NRMI}, where the growth of
$w_{2^n-1}$ is expressed through the spectral radius of a subdivision
operator; numerical bounds then showed that the Daubechies,
least-asymmetric and Coiflet packet systems are not Schauder bases of
$L^p(\R)$ for large $p$. Nielsen and Zhou \cite{NZ} recast these
estimates in terms of $p$-norm joint spectral radii, leading to
\eqref{eq:NZ-branch}--\eqref{eq:NZ-condition} and
Conjecture~\ref{conj:NZ}, and showed that \eqref{eq:NZ-condition} fails
for the filter of length four when $p$ is near $1$ or $\infty$
\cite[Example~3]{NZ}.

In \cite{NACHA}, the author proved the first assertion
for the filter of length four and every $1<p<\infty$, $p\ne2$. Writing
$\rho_p(\mathcal B)=e^{P(p)/p}$ with a matrix pressure $P$, uniqueness of
matrix equilibrium states \cite{FK} shows that an affine segment of $P$
forces all periodic words to grow at the same normalized rate; an
explicit computation with the length-four coefficients then separates
two such rates. These explicit calculations do not provide a uniform
proof for all orders, and the second assertion of
Conjecture~\ref{conj:NZ} was left open.

\begin{remark}\label{rem:NRMI}
Theorem~\ref{thm:main}(ii) confirms and extends the findings of
\cite[Section~3]{NRMI}. There, numerical lower bounds for the Daubechies
filters of lengths $4$ to $40$, and for the least-asymmetric and Coiflet
filters, gave exponential growth of $\norm{w_{2^n-1}}_p$ for all
$p\ge p_0$, with an explicit $p_0>2$ depending on the filter
\cite[Corollary~3.1]{NRMI}; for the filter of length four,
$\norm{w_{2^n-1}}_p\to\infty$ was shown for every $p>2$
\cite[Theorem~3.1]{NRMI}. Theorem~\ref{thm:main}(ii) gives exponential
growth for every $p>2$ and every Daubechies filter of length at least
four and, by Proposition~\ref{prop:factors}, for the least-asymmetric filters
as well.
\end{remark}

\subsection{Strategy}
The reduction to pressure and the affine-rigidity principle are those
of \cite{NACHA}. The explicit calculations with the length-four
coefficients become increasingly cumbersome as the filter length
grows and do not provide a uniform proof for all orders. We replace
each filter-specific step by a structural argument valid for every
$m\ge2$.
\begin{itemize}
\item \emph{Irreducibility.} The $9\times9$ determinant computation of
\cite{NACHA} is replaced by Lemma~\ref{lem:irreducible}, which uses
only coprimality of $H(z)$ and $H(-z)$ and the location of the
unit-circle zeros of $H$. It is modelled on Protasov's
clean-polynomial criterion \cite{P06}.
\item \emph{Excluding affine pressure.} The explicit separation of
periodic rates, via characteristic polynomials of $B_0$ and $B_0B_1^2$,
is replaced by the theorem of Protasov and Voynov
\cite{PV}: constant spectral radius on an irreducible semigroup forces
simultaneous orthogonality of the generators, which is incompatible
with the endpoint coefficients, the energy identity, and the double
zero of the high-pass symbol (Proposition~\ref{prop:noaffine}).
\item \emph{The second assertion.} The same argument applies to the
pressure of the four-matrix family $\mathcal T$, normalized by
orthonormality at $p=2$, and yields part~(iii).
\end{itemize}
Apart from standard facts about compactly supported orthonormal
wavelets, the substantive external inputs are the theorems of Feng and
K\"aenm\"aki \cite{FK} and of Protasov and Voynov \cite{PV}. We include
the affine-rigidity argument, the packet-to-matrix transfer identities,
and the basis obstruction. Section~\ref{sec:masks} treats the masks,
Section~\ref{sec:pressure} the pressure, Section~\ref{sec:transfer}
the transfer identities, and Section~\ref{sec:conclusion} the proofs;
Section~\ref{sec:further} collects further observations.

\section{Mask identities and irreducibility}\label{sec:masks}

Retain the notation of Section~\ref{sec:intro}, with $d=2m-1$.
Both endpoints are nonzero. Set
\[
 A(z)=\sum_{k=0}^d a_kz^k,\qquad
 H(z)=A(-z)=\sum_{k=0}^d h_kz^k.
\]
The auxiliary mask $h$ is the sign-alternated low-pass mask. Reversing
it and changing its overall sign gives a translate of the conventional
high-pass mask $b$; this relation is made explicit in
Section~\ref{sec:transfer}. The quadrature mirror identities imply
the Laurent polynomial identity
\begin{equation}\label{eq:qmf}
 H(z)H(z^{-1})+H(-z)H(-z^{-1})=4\quad(z\ne0),
 \qquad \sum_{k=0}^d h_k^2=2.
\end{equation}
Indeed, the left side retains twice the even autocorrelation coefficients,
which are $2\delta_{j0}$ in this normalization.
Consequently, $H$ has no pair of opposite roots: $H(z)$ and $H(-z)$
are coprime. The point $z=0$ is excluded by $h_0\ne0$.

The Daubechies magnitude formula \cite[Section 4]{D88} gives
\begin{equation}\label{eq:magnitude}
 |H(e^{it})|^2
 =4\sin^{2m}(t/2)
   \sum_{j=0}^{m-1}\binom{m-1+j}{j}\cos^{2j}(t/2).
\end{equation}
The sum is strictly positive. Hence the only zero of $H$ on the unit
circle is $1$, and that zero has multiplicity $m$. In particular,
\begin{equation}\label{eq:double}
 H(1)=H'(1)=0.
\end{equation}

The matrices $B_0,B_1$ of \eqref{eq:matrices} are the transition
matrices of the mask $h$.

\subsection{Invertibility and irreducibility}

Invertibility and irreducibility of $B_0,B_1$ also follow from
Protasov's clean-polynomial criterion \cite[Theorem~1]{P06}, applied
to $m(z)=H(z)/2$; his setting allows an arbitrary polynomial with
nonzero endpoint coefficients, so $H(1)=0$ is no obstacle. Indeed, $H$
has no opposite roots, and it has no cycle, i.e., no finite cycle
$\lambda\mapsto\lambda^2$ of nonzero points with $H(-\lambda)=0$:
such points lie on the unit circle, hence all equal $-1$, which is not
periodic under squaring. We give direct proofs adapted to the present
setting.

We first use coprimality of the mask symbols to establish invertibility,
which will allow us to pass from invariant subspaces to invariance under
inverse transition matrices.

\begin{lemma}\label{lem:invertible}
If a polynomial $H$ of degree $d$ has nonzero constant term and
$\gcd(H(z),H(-z))=1$, both matrices in \eqref{eq:matrices} are invertible.
\end{lemma}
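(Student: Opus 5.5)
We will prove that each $B_\eta$ is invertible by showing that its transpose has trivial kernel. A vector in $\ker B_\eta^{T}$ will be encoded as a polynomial, and the kernel condition will be turned into a polynomial identity that coprimality forbids.

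\emph{Step 1: translate the kernel condition.} Let $v=(v_0,\dots,v_{d-1})$ satisfy $v^{T}B_\eta=0$, that is,
\[
 \sum_{\alpha=0}^{d-1} v_\alpha h_{\eta+2\alpha-\beta}=0\qquad\text{for } 0\le\beta\le d-1 .
\]
Set $V(z)=\sum_\alpha v_\alpha z^{2\alpha}$, a polynomial in $z^2$ of degree at most $2d-2$. Then
\[
 z^{\eta}V(z)H(z^{-1})=\sum_{\alpha,k} v_\alpha h_k\, z^{\eta+2\alpha-k},
\]
and the coefficient of $z^\beta$ in this product is $\sum_\alpha v_\alpha h_{\eta+2\alpha-\beta}$. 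The kernel condition therefore says that the Laurent polynomial $G(z)=z^\eta V(z)H(z^{-1})$ has vanishing coefficients at $z^\beta$ for $0\le\beta\le d-1$.

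\emph{Step 2: use the exponent range.} The exponents that actually occur in $G$ lie in $[\eta-d,\ \eta+2d-2]$, because $0\le k\le d$ and $0\le\alpha\le d-1$. Removing the vanishing window $[0,d-1]$ leaves two pieces: the negative exponents $\ge\eta-d$, and the exponents $\ge d$, which go up to $\eta+2d-2$. From here we aim to show that $V(z)H(z^{-1})$ is divisible, in a suitable sense, by the even–odd symmetrization. The tool is the polyphase splitting $H(z)=H_e(z^2)+zH_o(z^2)$. Because $V$ is even, the parts of $G$ with even and odd exponents are
\[
 V\,H_e(z^{-2})\quad\text{and}\quad V\,z^{-1}H_o(z^{-2})
\]
(up to the shift $z^\eta$). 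Each part must vanish on its own on the even or odd exponents in the window. Substituting $w=z^2$ turns these into statements about the polynomial $v(w)=\sum_\alpha v_\alpha w^\alpha$ of degree at most $d-1$ multiplied by $H_e(w^{-1})$ and by $H_o(w^{-1})$. In each product roughly $d/2$ consecutive coefficients vanish.

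\emph{Step 3, the main obstacle: conclude $v=0$ from coprimality.} Note that $\gcd(H(z),H(-z))=1$ is equivalent to $\gcd(H_e,H_o)=1$: a common root $w_0$ of $H_e$ and $H_o$ gives $\pm\sqrt{w_0}$ as opposite roots of $H$, and the converse also holds. The lemma is then the classical statement that the Sylvester-type matrix of $H_e,H_o$ is nonsingular, and $B_\eta$, after reordering its rows and columns by parity, should be exactly such a resultant (Sylvester) matrix of the pair $(H_e,H_o)$, up to the padding by $\eta$.

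I would first do the reordering explicitly. Rows $\alpha$ and columns $\beta$ are split according to the parity of $\eta-\beta$, so that the entry $h_{\eta+2\alpha-\beta}$ becomes a coefficient of either $H_e$ or $H_o$ placed in a shifted (Toeplitz) position. For $d=2m-1$ the polynomials $H_e$ and $H_o$ have degree at most $m-1$; the truncation $0\le\alpha\le d-1$ supplies $2m-1$ rows, one more than the $2m-2$ rows of the Sylvester matrix. I expect the extra row and column to split off as a block whose diagonal entry is $h_0$ for one value of $\eta$ and $h_d$ for the other. That block would be triangular, so
\[
 \det B_\eta=\pm h_0^{a}h_d^{b}\,\mathrm{Res}(H_e,H_o),
\]
which is nonzero because $h_0\ne0$, $\deg H=d$ gives $h_d\ne0$, and $\gcd(H_e,H_o)=1$.

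Getting these index bookkeeping details right, in particular the boundary rows, is the delicate part. If the block structure turns out to be messy, the fallback is the kernel argument of Steps 1–2. There the two vanishing windows would force $v\,H_e(w^{-1})$ and $v\,H_o(w^{-1})$ to share the factor $v$ in a way that makes $H_e/H_o$ equal to a rational function of too low degree, contradicting coprimality.
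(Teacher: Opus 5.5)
Your argument stops short of a proof. Steps 1--2 only restate the kernel condition. The decisive Step 3 is asserted as an expectation (\emph{should be}, \emph{I expect}): the claim that $B_\eta$ is, after a parity reordering, a bordered Sylvester matrix of $(H_e,H_o)$ with $\det B_\eta=\pm h_0^ah_d^b\,\mathrm{Res}(H_e,H_o)$. The fallback about \emph{a rational function of too low degree} is not an argument. The plan is sound, however, and the missing bookkeeping is short. Since $(B_0)_{0\beta}=h_{-\beta}$ and $(B_1)_{d-1,\beta}=h_{2d-1-\beta}$, the first row of $B_0$ is $h_0e_0^{\mathsf T}$ and the last row of $B_1$ is $h_de_{d-1}^{\mathsf T}$. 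Expand along these rows and split the remaining columns by parity. Both complementary minors become the matrix $S$ of $(X,Y)\mapsto H_eX+H_oY$ on pairs of polynomials of degree at most $m-2$, with the two column blocks interchanged for $\eta=1$. Hence $\det B_0=\pm h_0\det S$ and $\det B_1=\pm h_d\det S$. You still need $\det S\neq0$, and here formal degrees matter: $H_e$ may have degree below $m-1$, but $H_o$ has exact degree $m-1$ because $h_d\neq0$. So $H_eX=-H_oY$ together with $\gcd(H_e,H_o)=1$ gives $H_o\mid X$, hence $X=0$ and then $Y=0$. Your reduction from $\gcd(H(z),H(-z))=1$ to $\gcd(H_e,H_o)=1$ is correct.

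This last divisibility step is the whole proof in disguise. Working with the left kernel and the even polynomial $\sum_\alpha v_\alpha z^{2\alpha}$ is what hid it. The paper uses the right kernel and $V(z)=\sum_\beta v_\beta z^\beta$. Then $(B_\eta v)_\alpha$ is the coefficient of $z^{\eta+2\alpha}$ in $H(z)V(z)$, and since $\deg(HV)\le 2d-1$, $B_0v$ and $B_1v$ are the complete even and odd coefficient sequences. Thus $B_\eta v=0$ means $H(z)V(z)=\pm H(-z)V(-z)$, so $H(z)\mid V(-z)$ by coprimality, and $\deg V<d$ forces $V=0$. Your Sylvester matrix is just the polyphase form of this identity: with $w=z^2$,
$H(z)V(z)=\bigl(H_eV_e+wH_oV_o\bigr)(w)+z\bigl(H_oV_e+H_eV_o\bigr)(w)$.
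Once completed, the resultant detour adds only the explicit relation $\det B_1=\pm(h_d/h_0)\det B_0$, which the paper does not need. It costs index bookkeeping tied to $d=2m-1$, whereas the paper's argument is parity-free.
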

\begin{proof}
Identify a vector $v=(v_0,\ldots,v_{d-1})^{\mathsf T}$ with the
polynomial
\[
    V(z)=\sum_{\beta=0}^{d-1} v_\beta z^\beta,
\]
and write
\[
    H(z)V(z)=\sum_{r=0}^{2d-1} p_r z^r.
\]
The degree bound follows from $\deg H=d$ and $\deg V\leq d-1$.
For $\eta\in\{0,1\}$ and $0\leq \alpha\leq d-1$, we have
\[
    p_{\eta+2\alpha}
    =\sum_{\beta=0}^{d-1}
      h_{\eta+2\alpha-\beta}v_\beta
    =(B_\eta v)_\alpha.
\]
Thus $B_0v$ and $B_1v$ are, respectively, the complete even and odd
coefficient subsequences of $H(z)V(z)$. Here completeness follows from
$\deg(HV)\leq 2d-1$.

Suppose first that $B_0v=0$. Then every even coefficient of $H(z)V(z)$
vanishes, so $H(z)V(z)$ is an odd polynomial. Hence
\[
    H(z)V(z)=-H(-z)V(-z).
\]
If instead $B_1v=0$, then every odd coefficient vanishes, so
$H(z)V(z)$ is even and
\[
    H(z)V(z)=H(-z)V(-z).
\]
In either case,
\[
    H(z)V(z)=\pm H(-z)V(-z).
\]
It follows that $H(z)$ divides $H(-z)V(-z)$. Since
\[
    \gcd\bigl(H(z),H(-z)\bigr)=1,
\]
Euclid's lemma gives
\[
    H(z)\mid V(-z).
\]
But $\deg V<d=\deg H$, and therefore $V=0$. Consequently $v=0$.
Thus both $B_0$ and $B_1$ have trivial kernels and, being square
matrices, are invertible.
\end{proof}

To apply the matrix-pressure theorem, we must also exclude nontrivial
common invariant subspaces. The following lemma shows that the location
of the unit-circle zeros supplies this additional information.

\begin{lemma}\label{lem:irreducible}
If in addition the only unit-circle zero of $H$ is $1$, the pair
$B_0,B_1$ is irreducible over $\C$, and hence over $\R$.
\end{lemma}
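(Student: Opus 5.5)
The plan is to work in the polynomial model from the proof of Lemma~\ref{lem:invertible}. Identify $v\in\C^d$ with $V(z)=\sum_{\beta<d}v_\beta z^\beta$ and write
\[
 H(z)V(z)=P_0(z^2)+zP_1(z^2),\qquad P_\eta\leftrightarrow B_\eta v .
\]
Evaluating at $\pm\lambda$ with $\lambda^2=\mu$ gives
\[
 P_0(\mu)=\tfrac12\bigl(H(\lambda)V(\lambda)+H(-\lambda)V(-\lambda)\bigr),\qquad
 \lambda P_1(\mu)=\tfrac12\bigl(H(\lambda)V(\lambda)-H(-\lambda)V(-\lambda)\bigr).
\]
So the adjoint action sends the evaluation functional at $\mu$ to combinations of evaluations at the two square roots $\pm\lambda$, weighted by $H(\pm\lambda)$. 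Now suppose $L\subset\C^d$ is a common invariant subspace with $0<\dim L<d$, viewed as a space of polynomials of degree $<d$. By Lemma~\ref{lem:invertible} and finite dimensionality, $L$ is also invariant under $B_0^{-1}$ and $B_1^{-1}$. The aim is to extract from $L$ a finite set of points carrying a backward-squaring structure, and then show that such a set cannot exist under the hypotheses.

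\emph{Step 1 (common zeros).} Let $Q=\gcd\{V:V\in L\}$ and let $Z$ be its zero set. Take $\mu\in Z$ and $\lambda^2=\mu$. Since $P_0,P_1\in L$ vanish at $\mu$, the two identities give $H(\lambda)V(\lambda)=H(-\lambda)V(-\lambda)=0$ for all $V\in L$. Hence each square root $\lambda$ of $\mu$ lies in $Z$ unless $H(\lambda)=0$. Because $H(z)$ and $H(-z)$ are coprime, at most one of $\pm\lambda$ is a zero of $H$, so every $\mu\in Z$ has at least one square root in $Z$.

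Since $Z$ is finite, iterating this backward produces a point $\lambda\in Z$ that is periodic under $z\mapsto z^2$. Such a point is $0$ or a root of unity, and $0\notin Z$ is handled using $h_0\ne0$ and invertibility. Along the cycle, each point has one square root in the cycle; the other square root, its negative, must then either lie in $Z$ or be a zero of $H$. If it is a zero of $H$, it lies on the unit circle, so it equals $1$, and the cycle point next to it is $-1$. That is impossible, because $-1$ is not periodic under squaring. Otherwise $Z$ contains both square roots at every stage, and repeating the argument forces $Z$ to contain the full backward orbit of a root of unity under squaring. This orbit is infinite unless it is cut off by zeros of $H$, and once again the only unit-circle zero is $1$, so it cannot be cut off. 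This contradicts finiteness, so $Q$ is constant.

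\emph{Step 2 (trivial gcd).} The main obstacle is ruling out a proper $L$ whose members have no common zero. For this I would dualise. The annihilator $L^\perp$, with respect to the bilinear pairing, is invariant under $B_\eta^{\mathsf T}$ and their inverses. By the evaluation formula above, the transposed operators act on functionals of the form $V\mapsto\sum_i c_iV^{(k_i)}(\lambda_i)$, and every functional on polynomials of degree $<d$ can be written this way. First try: take a nonzero $\ell\in L^\perp$ written with as few points as possible, and study the finite set of points supporting the orbit of $\ell$ under the semigroup, which spans the finite-dimensional space $L^\perp$. Forward application of $B_\eta^{\mathsf T}$ pushes supports to squares, and the inverses pull them back to square roots, with only $H$-zeros able to drop points. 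The same cycle argument as in Step~1 should then force the support to be empty.

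Alternatively, one can reverse the polynomials. Then $L^\perp$ becomes a subspace of the same type for the reflected mask, which also satisfies the coprimality and unit-circle hypotheses, and Step~1 applies to it. Multiple zeros and derivative functionals are the main technical nuisance here; I would handle them by passing to the associated graded and tracking only the leading point masses.

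Finally, irreducibility over $\C$ implies irreducibility over $\R$, since a real invariant subspace complexifies to a complex one of the same dimension.
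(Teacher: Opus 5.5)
Your Step~1 is correct. In dual form it is the core of the paper's argument: $\mu\in Z$ exactly when the moment vector $e(\mu)=(1,\mu,\ldots,\mu^{d-1})^{\mathsf T}$ lies in $L^\perp$, so $Z$ is the paper's set $F$ for $U=L^\perp$. Your backward-tree argument is a valid replacement for the paper's bijectivity of squaring on $F$. Your exclusion of $0$ also goes through: if $e_0\in L^\perp$, then $B_\eta^{\mathsf T}e_\alpha\in h_0e_{\eta+2\alpha}+\operatorname{span}\{e_0,\ldots,e_{\eta+2\alpha-1}\}$ with $h_0\neq0$ puts every $e_k$ in $L^\perp$.

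The gap is Step~2, which is where the real content lies, and neither of your routes closes it.
\begin{enumerate}
\item The support route fails. A functional on polynomials of degree $<d$ has no well-defined point support, since it is a combination of evaluations at any $d$ prescribed distinct points. By your own formula, $B_\eta^{\mathsf T}$ sends evaluation at $\mu$ to a combination of evaluations at both square roots of $\mu$. It pulls supports back rather than pushing them to squares, so supports double at every step, and there is no comparable formula for $(B_\eta^{\mathsf T})^{-1}$. There is therefore no finite set on which to run the cycle argument.
\item The reversal route is false. The entries $(B_\eta^{\mathsf T})_{\alpha\beta}=h_{\eta+2\beta-\alpha}$ carry the factor $2$ on the column index, and conjugating by the reversal matrix keeps it there. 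So $L^\perp$ is not an invariant subspace for the transition matrices of any reflected mask, and Step~1 does not transfer to it.
\end{enumerate}

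The missing idea is to combine the invariance under $B_0$ and $B_1^{-1}$ that you already noted.
\begin{itemize}
\item From $H(z)\,zV(z)=zP_0(z^2)+z^2P_1(z^2)$ you get $B_1(zV)=B_0V$ whenever $\deg V\le d-2$.
\item Hence $M=B_1^{-1}B_0$, which leaves $L$ invariant, acts as multiplication by $z$ on $\C[z]/(R)$ for some monic $R$ of degree $d$. Since $M$ is invertible, $R(0)\neq0$.
\item The invariant subspaces of $M$ are the ideals $S\cdot\C[z]/(R)$ with $S\mid R$. So every proper nonzero $L$ consists of multiples of a nonconstant $S$ and has a nonzero common zero.
\end{itemize}
Your Step~2 case is therefore empty, and Step~1 finishes the proof.

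This is exactly the paper's device. $M$ is the transpose of $C=B_0^{\mathsf T}(B_1^{\mathsf T})^{-1}$, which is a companion-type shift because row $j$ of $B_0^{\mathsf T}$ equals row $j+1$ of $B_1^{\mathsf T}$. An eigenvector of $C$ in $U$ is necessarily a moment vector $e(t)$ with $t\neq0$, i.e.\ an evaluation functional in the annihilator.
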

\begin{proof}
It is enough to rule out a nonzero proper common invariant subspace for
$B_0^{\mathsf T}$ and $B_1^{\mathsf T}$ over $\mathbb C$. Indeed,
transposition preserves irreducibility: a common invariant subspace for
one family gives, by taking its annihilator, a common invariant subspace
for the transposed family.

Suppose, toward a contradiction, that
\[
    \{0\}\subsetneq U\subsetneq \mathbb C^d
\]
is invariant under both $B_0^{\mathsf T}$ and $B_1^{\mathsf T}$. By
Lemma~\ref{lem:invertible}, these matrices are invertible. Their restrictions to $U$ are
therefore injective and, since $U$ is finite-dimensional, surjective.
Consequently,
\[
    B_0^{\mathsf T}U=U,
    \qquad
    B_1^{\mathsf T}U=U,
\]
and $U$ is invariant under
\[
    C:=B_0^{\mathsf T}(B_1^{\mathsf T})^{-1}.
\]

For $0\leq j\leq d-2$, row $j$ of $B_0^{\mathsf T}$ equals row $j+1$
of $B_1^{\mathsf T}$. Indeed, using zero-based indices,
\[
    (B_0^{\mathsf T})_{j\beta}
    =h_{2\beta-j}
    =(B_1^{\mathsf T})_{j+1,\beta}.
\]
Since $CB_1^{\mathsf T}=B_0^{\mathsf T}$ and $B_1^{\mathsf T}$ is
invertible, the first $d-1$ rows of $C$ are
$e_1^{\mathsf T},\ldots,e_{d-1}^{\mathsf T}$. Hence
\[
    Cy=(y_1,y_2,\ldots,y_{d-1},\ell(y))^{\mathsf T}
\]
for some linear functional $\ell$ on $\mathbb C^d$.

The restriction $C|_U$ has an eigenvector because $U$ is a nonzero
finite-dimensional complex vector space. Since $C$ is invertible, the
corresponding eigenvalue $t$ is nonzero. If $0\neq y\in U$ satisfies
$Cy=ty$, comparison of the first $d-1$ coordinates gives
\[
    y_j=t^j y_0,
    \qquad 0\leq j\leq d-1.
\]
Necessarily $y_0\neq0$, and after rescaling we obtain
\[
    e(t):=(1,t,\ldots,t^{d-1})^{\mathsf T}\in U.
\]
Define
\[
    F:=\{t\in\mathbb C\setminus\{0\}:e(t)\in U\}.
\]
Then $F$ is nonempty. Any $k\le d$ vectors $e(t_1),\ldots,e(t_k)$
corresponding to distinct points are linearly independent by the
Vandermonde determinant. Since $U$ is a proper subspace of
$\mathbb C^d$, it follows that $F$ is finite and
$|F|\leq\dim U\leq d-1$.

We next record the transition identities that control square roots. For
$z\neq0$, direct calculation from the definition of $B_0$ and $B_1$
gives
\[
    B_0^{\mathsf T}e(z^2)
    =\frac12\bigl(H(z)e(z)+H(-z)e(-z)\bigr)
\]
and
\[
    B_1^{\mathsf T}e(z^2)
    =\frac{1}{2z}
      \bigl(H(z)e(z)-H(-z)e(-z)\bigr).
\]
Equivalently,
\begin{align}
    B_0^{\mathsf T}e(z^2)
    +zB_1^{\mathsf T}e(z^2)
    &=H(z)e(z),                                      \label{eq:sqrt-plus}\\
    B_0^{\mathsf T}e(z^2)
    -zB_1^{\mathsf T}e(z^2)
    &=H(-z)e(-z).                                    \label{eq:sqrt-minus}
\end{align}

Let $t\in F$ and choose $z$ with $z^2=t$. Since $e(t)\in U$ and $U$ is
invariant under $B_0^{\mathsf T}$ and $B_1^{\mathsf T}$, the left-hand
sides of \eqref{eq:sqrt-plus} and \eqref{eq:sqrt-minus} belong to $U$.
It follows that
\[
    H(z)\neq0 \quad\Longrightarrow\quad z\in F,
\]
and similarly
\[
    H(-z)\neq0 \quad\Longrightarrow\quad -z\in F.
\]
The two square roots $z$ and $-z$ cannot both be zeros of $H$, because
$\gcd(H(z),H(-z))=1$. Thus every $t\in F$ has at least one square root
that belongs to $F$ and is not a zero of $H$.

Set
\[
    G:=\{z\in F:z^2\in F,\ H(z)\neq0\}.
\]
The preceding paragraph shows that the squaring map
\[
    s:G\longrightarrow F,
    \qquad s(z)=z^2,
\]
is surjective. Since $G\subseteq F$ and $F$ is finite, we must have
$G=F$, and $s:F\to F$ is bijective. In particular, every point of $F$
is periodic under squaring. Hence, for each $z\in F$, there is an
integer $n\geq1$ such that
\[
    z^{2^n}=z.
\]
As $z\neq0$, this gives $z^{2^n-1}=1$, and therefore $|z|=1$.

Finally, fix $z\in F$. Then $z^2=s(z)\in F$, and $z$, $-z$ are the two
square roots of $z^2$. Applying the square-root implication to
$t=z^2$, we see that $H(-z)\neq0$ would give $-z\in F$. Since $z\ne0$,
the points $z$ and $-z$ are distinct elements of $F$ with the same
square, contradicting injectivity of $s:F\to F$. Thus
\[
    H(-z)=0
    \qquad\text{for every }z\in F.
\]
Every point of $F$ lies on the unit circle, and the only unit-circle
zero of $H$ is $1$. We therefore have $-z=1$, or $z=-1$, for every
$z\in F$. Hence $F\subseteq\{-1\}$. Since $F$ is nonempty,
$F=\{-1\}$. On the other hand, $s$ maps $F$ into $F$, which gives
\[
    1=(-1)^2\in F,
\]
a contradiction.

Therefore $B_0^{\mathsf T}$ and $B_1^{\mathsf T}$ have no nonzero proper
common invariant subspace over $\mathbb C$. The pair $B_0,B_1$ is thus
irreducible over $\mathbb C$, and hence also over $\mathbb R$.
\end{proof}

Together with \eqref{eq:qmf} and \eqref{eq:magnitude}, these lemmas show
that the high-pass pair is invertible and irreducible for every $m\ge2$.

\section{Affine pressure and constant spectral radius}\label{sec:pressure}

Let $\mathcal A=(A_1,\ldots,A_\ell)$ be a finite indexed family of
invertible real $D\times D$ matrices, with $\ell\ge2$. For a word
$I=i_1\cdots i_n$, write $A_I=A_{i_n}\cdots A_{i_1}$ and define
\begin{equation}\label{eq:pressure}
 P_{\mathcal A}(q)=\lim_{n\to\infty}\frac1n
 \log\sum_{|I|=n}\norm{A_I}^q,\qquad q>0,
\end{equation}
where the norm is the Euclidean operator norm. Submultiplicativity
of the sums gives existence of the limit. Bounds on the norms and
smallest singular values of the generators show that it is finite.
H\"older's inequality gives convexity. A family is irreducible if its
only common invariant real subspaces are $\{0\}$ and $\R^D$.

Let $\Sigma=\{1,\ldots,\ell\}^{\N}$ with left shift $\sigma$, and let
$\mathcal M_\sigma$ denote its invariant Borel probability measures.
For $\mu\in\mathcal M_\sigma$, set
\[
 \Lambda(\mu)=\lim_{n\to\infty}\frac1n
 \sum_{|I|=n}\mu([I])\log\norm{A_I},
\]
where $[I]$ is the cylinder with initial word $I$. This finite limit
exists by subadditivity and shift invariance. Write $h(\mu)$ for
measure-theoretic entropy. The variational principle and the following
uniqueness and Gibbs statement are the matrix-pressure input
\cite[Proposition 1.2 and (1.3)]{FK}.

\begin{theorem}[Feng--K\"aenm\"aki]\label{thm:FK}
For an irreducible family $\mathcal A$ as above and every $q>0$,
\[
 P_{\mathcal A}(q)=\sup_{\mu\in\mathcal M_\sigma}
       \{h(\mu)+q\Lambda(\mu)\}.
\]
There is a unique maximizing measure $\mu_q$. Moreover, a constant
$C_q\ge1$ exists such that, for every word $I$ of length $n$,
\begin{equation}\label{eq:gibbs}
 C_q^{-1}e^{-nP_{\mathcal A}(q)}\norm{A_I}^q
 \le\mu_q([I])\le
 C_q e^{-nP_{\mathcal A}(q)}\norm{A_I}^q.
\end{equation}
\end{theorem}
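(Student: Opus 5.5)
The plan is to follow the standard quasi-Bernoulli route for matrix cocycles. Irreducibility gives quasi-multiplicativity of the norms. Quasi-multiplicativity gives a Gibbs measure by a Bowen-type construction. Finally, uniqueness of the equilibrium state comes from an ergodicity/absolute-continuity argument. Throughout, $q>0$ is fixed and $\norm{\cdot}$ is the Euclidean operator norm.

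\emph{Step 1 (quasi-multiplicativity).} I would first show that there exist $c>0$ and $k_0\in\N$ such that for all words $I,J$ there is a word $K$ with $|K|\le k_0$ and
\[
 \norm{A_JA_KA_I}\ge c\,\norm{A_J}\,\norm{A_I}.
\]
Write $\norm{A_J}\,\norm{A_I}$ as $|u^{\mathsf T}A_J\,\cdot\,A_I v|$ with $A_Iv=\norm{A_I}x$ and $u^{\mathsf T}A_J=\norm{A_J}y^{\mathsf T}$, where $x,y$ are unit vectors. It then suffices to find $K$ with $|y^{\mathsf T}A_Kx|\ge c$.

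For a fixed unit vector $x$, the span of $\{A_Kx\}$ over all words $K$, including the empty word, is a nonzero invariant subspace. By irreducibility it is all of $\R^D$, so some $K$ has $y^{\mathsf T}A_Kx\neq0$. Since words of length $\le D$ already span, the function $(x,y)\mapsto\max_{|K|\le D}|y^{\mathsf T}A_Kx|$ is continuous and strictly positive on the compact set $S^{D-1}\times S^{D-1}$. Hence it is bounded below by some $c>0$, and we may take $k_0=D$.

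This step is where irreducibility is used, and I expect it to be the only genuinely matrix-theoretic obstacle; everything after it is symbolic dynamics.

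\emph{Step 2 (counting and Gibbs measure).} Put $s_n=\sum_{|I|=n}\norm{A_I}^q$. Submultiplicativity gives $s_{n+m}\le s_ns_m$. Step 1, combined with the bounds $\min\sigma_{\min}(A_i)>0$ (invertibility) to absorb the bounded connecting words, gives $s_{n+m}\ge c' s_ns_m$. Together these yield
\[
 e^{nP}\le s_n\le C\,e^{nP},\qquad P=P_{\mathcal A}(q).
\]
I would then define probability measures $\nu_n$ on $\Sigma$ by assigning weight $\norm{A_I}^q/s_n$ to $[I]$ for $|I|=n$. Let $\mu_q$ be a weak-$*$ limit of the Cesàro averages $\frac1N\sum_{n<N}\nu_N\circ\sigma^{-n}$; it is shift-invariant. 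The two-sided quasi-multiplicative bounds pass to the limit cylinder by cylinder and give \eqref{eq:gibbs}.

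\emph{Step 3 (variational principle and uniqueness).} The inequality $h(\mu)+q\Lambda(\mu)\le P$ for every invariant $\mu$ is the standard subadditive estimate: apply Jensen's inequality to $\sum_{|I|=n}\mu([I])\bigl(-\log\mu([I])+q\log\norm{A_I}\bigr)\le\log s_n$, divide by $n$ and let $n\to\infty$. Applying \eqref{eq:gibbs} to $\mu_q$ gives $h(\mu_q)+q\Lambda(\mu_q)=P$, so the supremum equals $P$ and is attained at $\mu_q$.

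For uniqueness:
\begin{enumerate}[label=(\alph*)]
\item Gibbs plus Step 1 give the quasi-mixing bound $\mu_q([I]\cap\sigma^{-n-|K|}[J])\gtrsim\mu_q([I])\mu_q([J])$ for some short connecting $K$. Hence $\mu_q$ is ergodic.
\item If $\mu$ is another ergodic equilibrium state, Shannon--McMillan--Breiman and Kingman's theorem show that $\mu$-typical cylinders satisfy $\mu([I_n])=e^{-nP+o(n)}\norm{A_{I_n}}^q$.
\item Combining (b) with the Gibbs bounds and the standard Bowen argument, using the quasi-Bernoulli property, yields $\mu\ll\mu_q$. By ergodicity $\mu=\mu_q$.
\end{enumerate}
Non-ergodic maximizers are excluded because ergodic decomposition is affine for $h+q\Lambda$, so almost every ergodic component of a maximizer would itself be a maximizer. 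The delicate point in this step is the absolute-continuity claim in (c), which I would handle by exactly Bowen's proof for Gibbs states of Hölder potentials, with quasi-multiplicativity replacing bounded distortion.
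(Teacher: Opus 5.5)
Your argument is correct, but it takes a different route from the paper, which gives no proof of this theorem and imports it from \cite[Proposition~1.2 and (1.3)]{FK}. The paper's only work is a change of convention. Feng and K\"aenm\"aki multiply words in the opposite order, so the paper applies their result to $(A_1^{\mathsf T},\ldots,A_\ell^{\mathsf T})$, whose word products $A_I^{\mathsf T}$ have the same norms, and notes that irreducibility passes to transposes via annihilators. You instead rebuild the machinery behind the citation directly in the paper's order $A_{IKJ}=A_JA_KA_I$: Feng's quasi-multiplicativity lemma from irreducibility and compactness, a Bowen-type Gibbs construction, the Jensen bound for the variational principle, and Bowen's uniqueness argument. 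This makes the transposition step unnecessary. It also shows that only invertibility and real irreducibility, exactly as the paper defines it, are used. The citation buys brevity, at the cost of relying on an external statement whose conventions and hypotheses have to be matched.

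Two steps deserve a line more if you write this out. First, the lower bound in \eqref{eq:gibbs}: connecting words of variable length must not move $I$ off its prescribed position. Take $U,V$ of lengths $n-k_0$ and $N-n-|I|-k_0$, and form $UK_1IK_2V$ with connecting words from Step~1. Then add arbitrary letters at the two outer ends, not in the middle where they would break the estimate, so that $I$ occupies positions $n+1,\ldots,n+|I|$ of a word of length $N$. This padding is where $\min_i\sigma_{\min}(A_i)>0$ is really needed. The map $(U,V)\mapsto$ word has multiplicity at most $(k_0+1)^2$, and the discarded values of $n$ cost only $O(1/N)$ in the Ces\`aro average. Second, for uniqueness your step (b) is not needed. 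Bowen's argument uses only the upper Gibbs bound and the subadditive inequalities $H_n(\mu)\ge nh(\mu)$ and $\sum_{|I|=n}\mu([I])\log\norm{A_I}\ge n\Lambda(\mu)$, where $H_n(\mu)=-\sum_{|I|=n}\mu([I])\log\mu([I])$. Suppose an equilibrium state $\mu$ were singular to $\mu_q$. Approximate a set of full $\mu$-measure and zero $\mu_q$-measure by unions $U_n$ of $n$-cylinders. Then
\[
 0\le H_n(\mu)+q\sum_{|I|=n}\mu([I])\log\norm{A_I}-nP\le\log C_q+\log2+\mu(U_n)\log\mu_q(U_n)\to-\infty,
\]
a contradiction. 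Ergodicity of $\mu_q$ then finishes the ergodic case as you say, and your ergodic-decomposition remark handles the rest.
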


The product order in \cite{FK} is the reverse of ours. Applying that
result to $(A_1^T,\ldots,A_\ell^T)$ gives exactly
Theorem~\ref{thm:FK}: the corresponding word product is $A_I^T$, its
norm is unchanged, and transposition preserves irreducibility by
annihilators of invariant subspaces.

The following proposition turns an affine segment of the pressure into
a restriction on every finite matrix product. This is the affine-rigidity
principle of \cite{NACHA}; the proof below uses only uniqueness and the
Gibbs property.

\begin{proposition}\label{prop:rigidity}
If $\mathcal A$ is irreducible and $P_{\mathcal A}$ is affine on a
nondegenerate interval in $(0,\infty)$, then for some $\beta\in\R$,
\[
 P_{\mathcal A}(q)=\log\ell+\beta q\quad(q>0),\qquad
 \spr(A_I)=e^{\beta|I|}\quad\hbox{for every word }I.
\]
\end{proposition}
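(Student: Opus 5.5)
Suppose $P=P_{\mathcal A}$ is affine on a nondegenerate interval $J\subset(0,\infty)$, say $P(q)=c+\beta q$ for $q\in J$. The plan is to use uniqueness of equilibrium states to show that a single measure $\mu$ is the equilibrium state for every $q\in J$. Then we compare Gibbs bounds at two values of $q$ to force $\norm{A_I}$ to be essentially $e^{\beta|I|}$ uniformly in $I$. Finally we read off the spectral radius and the constant $c$.

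\emph{Step 1 (one measure for all $q\in J$).} Fix $q_1<q_2$ in $J$ and $q_0=(q_1+q_2)/2$, and let $\mu=\mu_{q_0}$ be the unique maximizer at $q_0$ from Theorem~\ref{thm:FK}. By the variational principle, $h(\mu)+q\Lambda(\mu)\le P(q)$ for every $q$. Both sides are affine in $q$ on $J$ and agree at the interior point $q_0$. A nonnegative affine function on an interval vanishing at an interior point vanishes identically, so equality holds on all of $J$. Uniqueness then gives $\mu_q=\mu$ for every $q\in J$.

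\emph{Step 2 (comparing Gibbs bounds).} Apply \eqref{eq:gibbs} at $q_1$ and at $q_2$ to the same measure $\mu$. For every word $I$ of length $n$,
\[
 C^{-2}\le e^{-n(P(q_1)-P(q_2))}\norm{A_I}^{q_1-q_2}\le C^2 .
\]
Since $P(q_1)-P(q_2)=\beta(q_1-q_2)$, this yields a constant $K\ge1$ with
\[
 K^{-1}e^{\beta n}\le\norm{A_I}\le Ke^{\beta n}\qquad\text{for all words }I\text{ with }|I|=n .
\]

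\emph{Step 3 (spectral radius and the constant).} For any word $I$, the power $A_I^k=A_{I^k}$ is itself a word product of length $k|I|$, so Step~2 gives $K^{-1}e^{\beta k|I|}\le\norm{A_I^k}\le Ke^{\beta k|I|}$. Taking $k$-th roots and applying Gelfand's formula gives $\spr(A_I)=e^{\beta|I|}$. For the pressure, the same two-sided bound shows $\sum_{|I|=n}\norm{A_I}^q$ lies between $K^{\mp q}\ell^ne^{\beta qn}$. Hence $P(q)=\log\ell+\beta q$ for every $q>0$, not just on $J$.

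\emph{Main obstacle.} The key step is Step~1, the identification $\mu_{q_1}=\mu_{q_2}$. It needs the equality $h(\mu)+q\Lambda(\mu)=P(q)$ to propagate from $q_0$ to the whole interval, which requires $\Lambda(\mu)$ finite so that the expression is affine in $q$. Finiteness holds because the generators are invertible, which bounds $\log\norm{A_I}$ linearly in $|I|$. Once Step~1 is in place, Steps~2 and~3 are routine consequences of the Gibbs property.
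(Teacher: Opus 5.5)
Your proposal is correct and follows essentially the same route as the paper. A nonnegative affine gap with an interior zero makes one equilibrium state serve on the whole affine interval, and uniqueness plus the two Gibbs bounds pin $\norm{A_I}$ to $e^{\beta|I|}$ up to constants; summing over the $\ell^n$ words and applying Gelfand's formula to $A_I^k=A_{I^k}$ then finishes. One detail should be stated explicitly: dividing the two Gibbs inequalities in Step~2 requires $\norm{A_I}>0$, i.e.\ invertibility of every product, which the paper notes at that point.
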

\begin{proof}
Write $P_{\mathcal A}(q)=\alpha+\beta q$ on the interval and choose
two distinct interior points $r<s$. The affine function
$h(\mu_r)+q\Lambda(\mu_r)$ lies below $P_{\mathcal A}(q)$ and agrees
with it at $r$. On the affine interval their difference is a
nonnegative affine function with an interior zero; it is identically
zero. Hence $\Lambda(\mu_r)=\beta$, $h(\mu_r)=\alpha$, and $\mu_r$
is an equilibrium state at $s$. Uniqueness gives $\mu_r=\mu_s$.
Comparing \eqref{eq:gibbs} at $r$ and $s$ yields
\[
 (C_rC_s)^{-1}e^{n(P_{\mathcal A}(s)-P_{\mathcal A}(r))}
 \le\norm{A_I}^{s-r}\le
 C_rC_s e^{n(P_{\mathcal A}(s)-P_{\mathcal A}(r))}.
\]
Division is legitimate since every product is invertible. Therefore,
with a constant $C$ independent of $I,n$,
\begin{equation}\label{eq:uniform}
 C^{-1}e^{n\beta}\le\norm{A_I}\le Ce^{n\beta}.
\end{equation}
Raising to the power $q>0$, summing over the $\ell^n$ words, and taking
logarithmic limits proves $P_{\mathcal A}(q)=\log\ell+\beta q$.
Finally, apply \eqref{eq:uniform} to the repetitions $I^k$ of a fixed
word. Since $A_{I^k}=A_I^k$, Gelfand's formula gives
$\spr(A_I)=e^{\beta|I|}$.
\end{proof}

\begin{samepage}
Once the spectral radii of all products have been forced to equal one,
the following theorem converts this information into a common
orthogonality property. We will rule out that property using the
endpoint coefficients of the filters.

\begin{theorem}[Protasov--Voynov {\cite[Theorem 2]{PV}}]
\label{thm:PV}
If a closed irreducible semigroup of real $D\times D$ matrices has
spectral radius one at every element, then in a suitable basis of
$\R^D$ all nonsingular elements of the semigroup are orthogonal matrices.
\end{theorem}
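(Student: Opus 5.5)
The plan is to pass from ``spectral radius one everywhere'' to boundedness of the semigroup, then to compactness, and finally to an invariant inner product obtained by averaging over a compact group of nonsingular elements. Let $\mathcal S$ denote the closed irreducible semigroup and $\mathcal A=\operatorname{span}\mathcal S$ the real algebra it generates.

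\emph{Step 1: $\mathcal S$ is bounded.} Suppose it is not. Choose $S_k\in\mathcal S$ with $\norm{S_k}\to\infty$ and pass to a subsequence with $S_k/\norm{S_k}\to X\ne0$, so that $X\in\mathcal A$. For every $B\in\mathcal S\cup\{I\}$ we have
\[
 \spr(XB)=\lim_k\spr(S_kB)/\norm{S_k}\le\lim_k 1/\norm{S_k}=0,
\]
since $S_kB\in\mathcal S$. Hence $XB$ is nilpotent and $\operatorname{tr}(XB)=0$. By linearity, $\operatorname{tr}(XY)=0$ for all $Y\in\mathcal A$.

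The set $J=\{X\in\mathcal A:\operatorname{tr}(XY)=0\ \forall Y\in\mathcal A\}$ is a two-sided ideal of $\mathcal A$. For $X\in J$ we get $\operatorname{tr}(X^k)=0$ for all $k\ge1$, so every element of $J$ is nilpotent. An ideal consisting of nilpotent elements lies in the radical of $\mathcal A$. Irreducibility makes $\mathcal A$ semisimple: by Burnside/Wedderburn over $\R$ it is a full matrix algebra over $\R$, $\C$ or $\mathbb H$, and its radical is an invariant-subspace obstruction. So $J=0$, which contradicts $X\ne0$. Thus $\mathcal S$ is bounded, and being closed it is compact.

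\emph{Step 2: nonsingular elements form a compact group.} Let $\mathcal G$ be the set of nonsingular elements of $\mathcal S$. For $A\in\mathcal G$, the powers $A^n$ are bounded. Hence $A$ has all eigenvalues in the closed unit disc, and unimodular eigenvalues are semisimple. The closure of $\{A^n\}$ is a compact monoid containing an idempotent $E=\lim A^{n_k}$, namely the spectral projection onto the peripheral part of $A$. The key claim is that $|\det A|=1$, equivalently $E=I$. Once this holds, $A^{n_k-1}\to A^{-1}$ lies in $\mathcal S$. It then follows that $\mathcal G$ is closed under inverses. It is also closed as a subset of $\mathcal S$, because $|\det|\equiv1$ on it passes to limits. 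So $\mathcal G$ is a compact group.

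\emph{Step 3: averaging.} With $\mathcal G$ a compact group, set $\langle x,y\rangle_*=\int_{\mathcal G}\langle gx,gy\rangle\,dg$ using Haar measure. Every $g\in\mathcal G$ is an isometry for $\langle\cdot,\cdot\rangle_*$. An orthonormal basis for this inner product is the ``suitable basis'' of the statement, in which all of $\mathcal G$ consists of orthogonal matrices.

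\emph{Main obstacle.} I expect the hard step to be the claim $|\det A|=1$ in Step~2, i.e.\ excluding a contracting part for a nonsingular element. Spectral radius one alone does not forbid $\spr(A)=1$ together with $|\det A|<1$, so the argument must use the other elements of $\mathcal S$ and irreducibility.

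My first attempt is as follows. Assume $E\ne I$ and let $V=E\R^D$. Since $E\in\mathcal S$, every compression $EBE|_V$ with $B\in\mathcal S$ is a limit of $A^{n_k}BA^{n_k}$ restricted to $V$. These compressions therefore form a compact semigroup on $V$ whose spectral radii are all one. I would work in the minimal two-sided ideal (the kernel) of the compact semigroup $\mathcal S$, which is completely simple and consists of elements of minimal rank $r$. Its idempotents give a Rees-matrix structure $E\mathcal SE\cong$ compact group on a rank-$r$ range. The step to push through is that minimal rank $r<D$ together with $\spr\equiv1$ forces $\operatorname{span}\{\,\mathrm{range}(ES)\,\}$ or $\bigcap\ker(SE)$ to be a nontrivial common invariant subspace. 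The natural candidates are the span of the ranges of all minimal-rank elements, and the intersection of their kernels. I would also use the trace identity of Step~1 to show that these are proper, since spectral radius one on $E\mathcal S E$ rules out the degenerate case. That contradiction yields $r=D$, hence $E=I$.
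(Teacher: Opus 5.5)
The paper does not prove this statement. It imports it from \cite[Theorem~2]{PV}, so your proposal has to stand on its own, and it has a genuine gap at its central step.

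Your Steps~1 and~3 are sound. In Step~1, $X=\lim S_k/\norm{S_k}$ lies in the trace-radical $J$ of $\mathcal A=\operatorname{span}\mathcal S$. $J$ is a nil ideal, and it vanishes because an irreducible algebra is simple. In Step~3, once the nonsingular elements form a compact group, Haar averaging gives the required inner product. The passage in Step~2 from $|\det A|=1$ to $A^{-1}\in\mathcal S$ is also correct. However, the claim that $|\det A|=1$ for every nonsingular $A\in\mathcal S$ is the real content of the theorem, and you do not prove it. As you note, it genuinely depends on irreducibility: the closed reducible semigroup $\{\operatorname{diag}(1,2^{-n}):n\ge0\}\cup\{\operatorname{diag}(1,0)\}$ has constant spectral radius one but contains $\operatorname{diag}(1,1/2)$.

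The route you sketch for this claim cannot work, because it aims at a false intermediate statement. You want to show that the minimal rank $r$ equals $D$, which would mean $\mathcal S$ has no singular elements at all. Here is a counterexample for $D\ge2$. Let $\mathcal S$ be the set of matrices $M$ with $Me_i=\epsilon_ie_{f(i)}$, where $f$ is any self-map of $\{1,\dots,D\}$ and $\epsilon_i\in\{\pm1\}$.
\begin{enumerate}[label=(\alph*)]
\item $\mathcal S$ is a finite, hence closed, semigroup.
\item It contains all signed permutation matrices, so it is irreducible.
\item Each column of $M$ has a single entry $\pm1$, so $\spr(M)\le1$. A cycle of $f$ of length $c$ through $i$ gives $M^ce_i=\pm e_i$, so $\spr(M)=1$.
\item Constant maps $f$ give rank-one elements, so $r=1<D$.
\end{enumerate}
The theorem still holds here, since the nonsingular elements are exactly the signed permutations.

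Your two candidate subspaces also cannot produce a contradiction in any case. The elements of minimal rank form a two-sided ideal $\mathcal I$. The span of their ranges is invariant because $\mathcal S\mathcal I\subseteq\mathcal I$, and the intersection of their kernels is invariant because $\mathcal I\mathcal S\subseteq\mathcal I$. By irreducibility these subspaces are always $\R^D$ and $\{0\}$. In addition, the Rees-structure fact that $E\mathcal SE$ is a compact group applies to idempotents lying in the kernel. It does not apply to the peripheral idempotent of $A$, which need not lie there.

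What is missing is an argument that rules out a contracting part of a nonsingular element while allowing $\mathcal S$ to contain singular elements. That is precisely the step the paper delegates to Protasov and Voynov.
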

\end{samepage}

In \cite[p.~377]{PV} a family generates its semigroup by
multiplication and closure. Their definition requires the constant
spectral radius to be nonzero, which forces its value to be one. We apply the theorem to such closures;
\cite[Corollary~1]{PV} covers semigroups generated by an irreducible
family of nonsingular matrices.

\section{From cascade coefficients to packet norms}\label{sec:transfer}

We prove the scalar transfer formula of \cite[Theorem 2 and
Corollary 3.1]{NZ} in the precise finite-support convention needed here.
This also fixes the distinction between the two-generator pressure of
one branch and the four-generator pressure of the full tree.

To recover packet norms from their cascade coefficients, we first need
a comparison between the $L^p$ norm of a linear combination of translates
and the $\ell^p$ norm of its coefficients.

\begin{lemma}\label{lem:stability}
If a bounded compactly supported function $\phi$ has orthonormal
integer translates, then for every $1<p<\infty$ there are constants
$0<c_p\le C_p<\infty$ such that
\[
 c_p\norm{v}_{\ell^p}\le
 \left\|\sum_k v_k\phi(\cdot-k)\right\|_p
 \le C_p\norm{v}_{\ell^p}
\]
for every finitely supported sequence $v$.
\end{lemma}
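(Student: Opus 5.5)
The plan is to prove the two inequalities separately, using only boundedness, compact support and orthonormality of translates of $\phi$. Let $\phi$ be supported in an interval of length $L$ and write $f=\sum_k v_k\phi(\cdot-k)$.

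\emph{Upper bound.} At each $x$ at most $N:=\lceil L\rceil+1$ translates $\phi(x-k)$ are nonzero. Hölder's inequality with this bounded number of terms gives
\[
 |f(x)|^p\le N^{p-1}\norm{\phi}_\infty^p\sum_k |v_k|^p\,\mathbf 1_{\operatorname{supp}\phi(\cdot-k)}(x).
\]
Integrating then yields $\norm{f}_p^p\le N^{p-1}L\norm{\phi}_\infty^p\norm{v}_{\ell^p}^p$. This step is routine and valid for $1\le p<\infty$.

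\emph{Lower bound.} Here I would use orthonormality to recover the coefficients: $v_k=\langle f,\phi(\cdot-k)\rangle$. Let $J_k$ be the support interval of $\phi(\cdot-k)$. Hölder's inequality gives
\[
 |v_k|\le\norm{f}_{L^p(J_k)}\norm{\phi}_{p'},
\]
so
\[
 \sum_k|v_k|^p\le\norm{\phi}_{p'}^p\sum_k\int_{J_k}|f|^p .
\]
Each point of $\R$ lies in at most $N$ of the intervals $J_k$. Hence $\sum_k\int_{J_k}|f|^p\le N\norm{f}_p^p$, and we obtain $c_p=(N^{1/p}\norm{\phi}_{p'})^{-1}$. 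Boundedness and compact support make $\norm{\phi}_{p'}$ finite.

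Neither direction is a real obstacle; the only care needed is the bounded-overlap count. Note that the argument in fact works for $1\le p<\infty$, but the statement only requires $1<p<\infty$. Finite support of $v$ makes all sums finite, so no convergence issues arise.
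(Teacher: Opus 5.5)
Your proof is correct. The upper bound is the same as in the paper: bounded overlap together with $|\sum_{j=1}^N x_j|^p\le N^{p-1}\sum_j|x_j|^p$. The lower bound, however, follows a different route.

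The paper argues by duality. For finitely supported $u$ it writes $\sum_k v_k\overline{u_k}=\int f\,\overline{\sum_k u_k\phi(\cdot-k)}$, applies H\"older's inequality and the already proved upper bound at the conjugate exponent $p'$, and takes the supremum over $u$ with $\norm{u}_{\ell^{p'}}\le1$. In other words, boundedness of the coefficient map $f\mapsto(\langle f,\phi(\cdot-k)\rangle)_k$ from $L^p$ to $\ell^p$ is obtained as the adjoint of the synthesis map on $\ell^{p'}$. You bound that coefficient map directly, one coefficient at a time, using H\"older on the support interval $J_k$ and the same overlap count.

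The paper's version is shorter once the upper bound is available. It is an instance of the general principle that a bounded synthesis operator at $p'$ plus biorthogonality gives the lower stability bound at $p$. Yours is self-contained, avoids the dual-norm supremum, and gives the explicit constant $c_p=(N^{1/p}\norm{\phi}_{p'})^{-1}$. The duality argument gives exactly this constant too, provided the upper bound at $p'$ is written with $\norm{\phi}_{p'}$ in place of $L^{1/p'}\norm{\phi}_\infty$; so the two arguments are essentially dual forms of the same estimate.

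Your remark that the argument extends to $p=1$ is also correct.
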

\begin{proof}
At each point at most a fixed number of translates overlap. The
inequality $|\sum_{j=1}^N x_j|^p\le N^{p-1}\sum_j|x_j|^p$ and
integration give the upper bound. For a finitely supported $v$, put
$f=\sum_k v_k\phi(\cdot-k)$. Orthonormality, H\"older's inequality,
and the upper bound at $p'=p/(p-1)$ give, for every finitely supported $u$,
\[
 \left|\sum_k v_k\overline{u_k}\right|
 =\left|\int f\,\overline{\sum_k u_k\phi(\cdot-k)}\right|
 \le C_{p'}\norm{f}_p\norm{u}_{\ell^{p'}}.
\]
Taking the supremum proves the lower bound.
\end{proof}

For a mask $f$ supported on $\{0,\ldots,d\}$, put
\[
 T_\eta(f)_{\alpha\beta}=f(\eta+2\alpha-\beta),
 \quad0\le\alpha,\beta<d,\quad\eta\in\{0,1\}.
\]
The next lemma identifies the cascade coefficients inside products of
transition matrices. Together with the preceding stability estimate,
this gives a comparison of packet norms and matrix sums with constants
independent of the depth and the individual masks.

\begin{lemma}\label{lem:cascade}
Let $f_1,\ldots,f_n$ be masks supported on $\{0,\ldots,d\}$, and define
$u_0=\phi$ and $u_j(x)=\sum_k f_j(k)u_{j-1}(2x-k)$.
Set
\[
 \prod_{j=1}^n\left(\sum_k f_j(k)z^{2^{j-1}k}\right)
       =\sum_k q_n(k)z^k.
\]
Then, for $t=\sum_{j=1}^n2^{j-1}\eta_j$,
\begin{equation}\label{eq:entry}
 [T_{\eta_n}(f_n)\cdots T_{\eta_1}(f_1)]_{\alpha\beta}
       =q_n(t+2^n\alpha-\beta).
\end{equation}
If $\phi$ satisfies Lemma~\ref{lem:stability}, then for $1<p<\infty$,
\begin{equation}\label{eq:transfer-finite}
 \norm{u_n}_p^p\asymp_{d,p,\phi}
 2^{-n}\sum_{\eta\in\{0,1\}^n}
 \norm{T_{\eta_n}(f_n)\cdots T_{\eta_1}(f_1)}^p.
\end{equation}
\end{lemma}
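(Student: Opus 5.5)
Both parts rest on one device: the cascade coefficients of $u_n$ can be read off from polynomial products. Iterating the recursion gives
\[
 u_n(x)=\sum_k q_n(k)\,\phi(2^nx-k),
\]
since $u_n(x)=\sum_k f_n(k)u_{n-1}(2x-k)$ and the product symbol is built by substituting $z^{2^{n-1}}$ for the last mask. For \eqref{eq:entry} I would argue by induction on $n$. The case $n=1$ is the definition $T_{\eta_1}(f_1)_{\alpha\beta}=f_1(\eta_1+2\alpha-\beta)=q_1(\eta_1+2\alpha-\beta)$.

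\textbf{Induction step for \eqref{eq:entry}.} Write $Q_n(z)=Q_{n-1}(z)F_n(z^{2^{n-1}})$, so that
\[
 q_n(k)=\sum_j f_n(j)\,q_{n-1}(k-2^{n-1}j).
\]
Multiplying out, the matrix product has entries
\[
 \sum_\gamma f_n(\eta_n+2\alpha-\gamma)\,q_{n-1}(t'+2^{n-1}\gamma-\beta),
 \qquad t'=\sum_{j<n}2^{j-1}\eta_j .
\]
Substitute $j=\eta_n+2\alpha-\gamma$. Then $2^{n-1}\gamma=2^{n-1}\eta_n+2^n\alpha-2^{n-1}j$, and the sum becomes
\[
 \sum_j f_n(j)\,q_{n-1}(t+2^n\alpha-\beta-2^{n-1}j)=q_n(t+2^n\alpha-\beta).
\]
The only delicate point is that $\gamma$ is restricted to $0\le\gamma<d$. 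So I must check that every $j$ with $f_n(j)\ne0$ and $q_{n-1}(\cdot)\ne0$ gives an index in range. This is a support count: $q_{n-1}$ is supported on $[0,(2^{n-1}-1)d]$ and $0\le\beta<d$. These force $\gamma$ into $[0,d-1]$ up to boundary terms that vanish, using $\eta\in\{0,1\}$. I expect this bookkeeping to be the main (though routine) obstacle.

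\textbf{The norm comparison \eqref{eq:transfer-finite}.} Dilation gives $\norm{u_n}_p^p=2^{-n}\norm{\sum_k q_n(k)\phi(\cdot-k)}_p^p$, and Lemma~\ref{lem:stability} makes this comparable to $2^{-n}\norm{q_n}_{\ell^p}^p$. By \eqref{eq:entry}, as $\eta$ runs over $\{0,1\}^n$ the parameter $t$ runs over $\{0,\ldots,2^n-1\}$ exactly once. For fixed $\beta$, the pairs $(t,\alpha)$ then give each index $k=t+2^n\alpha-\beta$ in the support of $q_n$ exactly once, up to the support bound $\deg Q_n\le(2^n-1)d<2^nd$. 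Hence
\[
 \sum_\eta\sum_{\alpha,\beta}\bigl|[T_{\eta_n}(f_n)\cdots T_{\eta_1}(f_1)]_{\alpha\beta}\bigr|^p=d\,\norm{q_n}_{\ell^p}^p .
\]
Finally, the operator norm and the entrywise $\ell^p$ norm of a $d\times d$ matrix are equivalent with constants depending only on $d$ and $p$. This gives \eqref{eq:transfer-finite} with constants depending on $d$, $p$ and $\phi$ only, independent of $n$ and of the masks.
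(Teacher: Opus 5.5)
Your proposal is correct and follows the paper's proof step for step. The paper also proves the entry formula by induction with the substitution $j=\eta_n+2\alpha-\gamma$, after checking that the truncated matrix sum equals the full convolution. It then uses division with remainder to show the entry sum equals $d\norm{q_n}_{\ell^p}^p$, and finishes with dilation, Lemma~\ref{lem:stability}, and finite-dimensional norm equivalence.

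The support check you flag is exact, with no boundary terms to handle. For $\gamma\le-1$ you get $t'+2^{n-1}\gamma-\beta\le-1-\beta<0$. For $\gamma\ge d$ the same index is at least $2^{n-1}d-(d-1)>d(2^{n-1}-1)$. So the $q_{n-1}$ factor vanishes identically outside $0\le\gamma<d$.
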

\begin{proof}
The recursion gives
\[
 u_n(x)=\sum_k q_n(k)\phi(2^nx-k),\qquad
 \operatorname{supp}q_n\subseteq[0,d(2^n-1)].
\]
Formula~\eqref{eq:entry} holds at $n=1$. In the inductive step, the
sum over an intermediate index $\gamma$ has factors
\[
 f_n(\eta_n+2\alpha-\gamma)\,
 q_{n-1}(t'+2^{n-1}\gamma-\beta),
 \qquad t'=\sum_{j=1}^{n-1}2^{j-1}\eta_j.
\]
For $\gamma<0$ the second argument is negative; for $\gamma\ge d$
it exceeds $d(2^{n-1}-1)$. Thus the finite matrix sum equals the
unrestricted convolution sum. Substituting
$k=\eta_n+2\alpha-\gamma$ gives \eqref{eq:entry}.

For fixed $\beta$, division with remainder gives every integer in
$[-\beta,d2^n-1-\beta]$ a unique representation
$k=t+2^n\alpha-\beta$, with $0\le t<2^n$ and $0\le\alpha<d$.
The binary digits of $t$ are precisely $\eta_1,\ldots,\eta_n$.
Since $0\le\beta\le d-1$, the interval starts at or below zero and
ends at or above $d(2^n-1)$, so it contains the support of $q_n$. Hence
\begin{equation}\label{eq:entry-sum}
 \sum_{\eta\in\{0,1\}^n}\sum_{\alpha,\beta=0}^{d-1}
 |[T_{\eta_n}(f_n)\cdots T_{\eta_1}(f_1)]_{\alpha\beta}|^p
       =d\sum_k|q_n(k)|^p.
\end{equation}
Finite-dimensional norm equivalence compares the entry sum with the
$p$th power of the operator norm, with constants depending only on
$d,p$. Lemma~\ref{lem:stability} and a change of variables give
$\norm{u_n}_p^p\asymp_{p,\phi}2^{-n}\sum_k|q_n(k)|^p$.
Combining these facts proves \eqref{eq:transfer-finite}.
\end{proof}

To put the high-pass mask on the same support as $a$, set
\begin{equation}\label{eq:supported-high}
 g_k=(-1)^ka_{d-k}=b_{k-(d-1)}=-h_{d-k}.
\end{equation}
Replacing $b$ by $g$ translates each packet. Indeed, a translation of
a parent by $t$ translates a low-pass child by $t/2$ and a high-pass
child by $(t+d-1)/2$. Induction proves preservation of all packet
norms, even though the translation can depend on the packet.
Let $L_\eta=T_\eta(a)$ and $K_\eta=T_\eta(g)$. With $J$ the reversal
matrix on $\R^d$, direct calculation gives
\begin{equation}\label{eq:reversal}
 K_\eta=-JB_{1-\eta}J.
\end{equation}
Thus the pair $(K_0,K_1)$ has the same pressure as $(B_0,B_1)$. The
four-matrix family of the introduction is
\begin{equation}\label{eq:fourfamily}
 \mathcal T=(L_0,L_1,K_0,K_1).
\end{equation}
Write
\[
 P=P_{\mathcal B},\qquad Q=P_{\mathcal T}.
\]
Each generator is invertible by Lemma~\ref{lem:invertible} and the
QMF identity: the symbols of $a$ and $g$ also have no opposite roots.
The high-pass pair is irreducible by Lemma~\ref{lem:irreducible} and
\eqref{eq:reversal}. Every common invariant subspace for the
four-generator family is invariant under $K_0,K_1$, so irreducibility
of this pair implies irreducibility of the full family.

We can now express both packet growth rates in terms of pressure.
Evaluating these identities at $p=2$ will fix the normalizations needed
to exclude affine pressure.

\begin{corollary}\label{cor:transfer}
For every $1<p<\infty$,
\begin{align}
 \lim_{n\to\infty}\norm{w_{2^n-1}}_p^{1/n}
 &=\exp\bigl((P(p)-\log2)/p\bigr),\label{eq:branch}\\
 \lim_{n\to\infty}
 \left(2^{-n}\sum_{j=0}^{2^n-1}\norm{w_j}_p^p\right)^{1/(np)}
 &=\exp\bigl((Q(p)-\log4)/p\bigr).\label{eq:level}
\end{align}
In particular,
\begin{equation}\label{eq:normalizations}
 P(2)=\log2,\qquad Q(2)=\log4.
\end{equation}
\end{corollary}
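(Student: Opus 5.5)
The plan is to derive both limits from Lemma~\ref{lem:cascade}, once the packets along the relevant branches have been identified with the masks $a$ and $g$. By the translation remark after \eqref{eq:supported-high}, we may replace $b$ by $g$ without changing any packet norm. Then $w_j$ at depth $n$ has the form $u_n$ with masks $f_1,\ldots,f_n\in\{a,g\}$. Here $f_i$ is determined by the binary digit of $j$ at position $n-i$: the last recursion step uses the least significant digit, so it corresponds to $f_n$ acting outermost. For the all-high-pass branch $w_{2^n-1}$, every $f_i=g$, and \eqref{eq:transfer-finite} gives
\[
 \norm{w_{2^n-1}}_p^p\asymp 2^{-n}\sum_{\eta\in\{0,1\}^n}\norm{K_{\eta_n}\cdots K_{\eta_1}}^p .
\]
By \eqref{eq:reversal}, $K_{\eta_n}\cdots K_{\eta_1}=\pm J B_{1-\eta_n}\cdots B_{1-\eta_1}J$. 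Since $J$ is orthogonal, the sum equals $\sum_{|I|=n}\norm{B_I}^p$. The constants in $\asymp$ are independent of $n$. Taking $n$th roots and using the definition \eqref{eq:pressure}, whose limit exists, we get $\norm{w_{2^n-1}}_p^{p/n}\to e^{P(p)-\log 2}$. This is \eqref{eq:branch}.

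For \eqref{eq:level}, sum \eqref{eq:transfer-finite} over all $2^n$ packets at depth $n$. The choices of mask sequences $(f_1,\ldots,f_n)\in\{a,g\}^n$ are in bijection with $j\in\{0,\ldots,2^n-1\}$. A pair (mask sequence, $\eta$-word) corresponds bijectively to a word in the four-letter alphabet $\mathcal T$, namely the letter $L_{\eta_i}$ or $K_{\eta_i}$ at position $i$. Therefore
\[
 \sum_{j<2^n}\norm{w_j}_p^p\asymp 2^{-n}\sum_{|I|=n}\norm{\mathcal T_I}^p ,
\]
again with uniform constants. The key point is that Lemma~\ref{lem:cascade} gives constants depending only on $d,p,\phi$ and not on the masks. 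Multiplying by $2^{-n}$ and taking $(np)$th roots yields $\exp((Q(p)-2\log2)/p)$.

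For the normalizations \eqref{eq:normalizations}, set $p=2$. The packets are $L^2$-orthonormal, since $\mathcal W$ is an orthonormal basis, so $\norm{w_j}_2=1$ for every $j$. Then the left sides of \eqref{eq:branch} and \eqref{eq:level} both equal $1$, which forces $P(2)=\log2$ and $Q(2)=\log4$.

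The main subtlety is bookkeeping rather than analysis. I need to check three things: that the digit and mask ordering matches the product order $A_I=A_{i_n}\cdots A_{i_1}$ used in Lemma~\ref{lem:cascade}; that the packet-dependent translations really preserve norms, so the $g$-packets may stand in for the $b$-packets; and that the sign and reversal in \eqref{eq:reversal} leave the operator norms unchanged. The ordering only permutes the summation, so it cannot affect the sums. The norm equality is immediate because $J$ is orthogonal and the sign is irrelevant.
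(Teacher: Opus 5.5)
Your proposal is correct and matches the paper's proof step for step. You apply Lemma~\ref{lem:cascade} with every mask equal to $g$, together with \eqref{eq:reversal} and translation invariance, for the branch; you sum \eqref{eq:transfer-finite} over the $2^n$ mask sequences, identified with the $4^n$ words of $\mathcal T$, for the level mean; and you use $L^2$-orthonormality at $p=2$ for the normalizations. One point you leave implicit is that packets with $j<2^{n-1}$ are still realized at depth $n$, because leading low-pass steps reproduce $\phi$ through the refinement equation; this is exactly the paper's ``leading zeros allowed'' remark.
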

\begin{proof}
For the branch, apply Lemma~\ref{lem:cascade} with every $f_j=g$,
and use \eqref{eq:reversal} and translation invariance of the norm.
Taking $np$th roots gives \eqref{eq:branch}.
For the full level, write $j=\sum_{r=1}^n\varepsilon_r2^{n-r}$,
with leading zeros allowed. The digits $\varepsilon_1,\ldots,\varepsilon_n$
record the successive low-pass and high-pass choices in
\eqref{eq:recursion}. Replacing $b$ by $g$ gives a translate of $w_j$.
Thus summing \eqref{eq:transfer-finite} over all $2^n$ choices
$f_r\in\{a,g\}$ sums exactly the norms of $w_0,\ldots,w_{2^n-1}$.
The constants are uniform over these choices,
and the right-hand sum is exactly the sum over all $4^n$ words in
$\mathcal T$. The dilation contributes $2^{-n}$, and the
normalization of the mean contributes another $2^{-n}$. This proves
\eqref{eq:level}. Every original packet has $L^2$ norm one, so the
left sides of both limit identities equal one at $p=2$, proving
\eqref{eq:normalizations}.
\end{proof}

Since $\rho_p(\mathcal B)=e^{P(p)/p}$ and
$\rho_p(\mathcal T)=e^{Q(p)/p}$, the identities \eqref{eq:branch} and
\eqref{eq:level} are \eqref{eq:NZ-branch} and \eqref{eq:NZ-mean} of the
introduction.

\section{Strict convexity and proof of the main theorem}\label{sec:conclusion}

The normalizations at $p=2$ force any affine pressure to be constant.
We now combine affine rigidity with the Protasov--Voynov theorem to show
that this would contradict the double zero of the high-pass symbol.

\begin{proposition}\label{prop:noaffine}
Both $P$ and $Q$ are strictly convex on $(0,\infty)$.
\end{proposition}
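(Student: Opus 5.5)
We argue by contradiction, treating $P$ and $Q$ in parallel. Both pressures are convex, so failure of strict convexity means affinity on some nondegenerate interval in $(0,\infty)$. The families $\mathcal B$ and $\mathcal T$ consist of invertible matrices and are irreducible (Lemma~\ref{lem:irreducible}, \eqref{eq:reversal} and the remark after \eqref{eq:fourfamily}), so Proposition~\ref{prop:rigidity} applies. It gives $P(q)=\log2+\beta q$, respectively $Q(q)=\log4+\beta q$, for all $q>0$. It also gives $\spr(A_I)=e^{\beta|I|}$ for every word. The normalizations \eqref{eq:normalizations} at $q=2$ force $\beta=0$. Hence every finite product of generators has spectral radius one, and \eqref{eq:uniform} shows that all products are uniformly bounded.

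The closure of the generated semigroup is then a closed irreducible semigroup. Its elements are limits of products, so by continuity of the spectral radius they all have spectral radius one; for the zero limit in particular, note that the uniform lower bound in \eqref{eq:uniform} prevents limits from vanishing. By Theorem~\ref{thm:PV} (via \cite[Corollary~1]{PV}), there is an invertible $S$ with $SA_iS^{-1}$ orthogonal for every generator. For $\mathcal T$ this includes $K_0,K_1$, which are similar to $-B_1,-B_0$ by \eqref{eq:reversal}. So in both cases we obtain a common $S$ with $SB_0S^{-1}$ and $SB_1S^{-1}$ orthogonal. Equivalently, there is a positive definite $G=S^{\mathsf T}S$ with $B_\eta^{\mathsf T}GB_\eta=G$ for $\eta=0,1$. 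In particular, every eigenvalue of $B_0$, $B_1$ and of all their products has modulus one, and each $B_\eta$ is diagonalizable.

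It remains to contradict this with the mask structure, and this is the main obstacle. The first thing I would try is a determinant and eigenvalue test. From $|\det B_\eta|=1$ and $|\mathrm{tr}\,B_\eta|\le d$ alone I do not expect a contradiction, so I would instead use the double zero \eqref{eq:double}. Put $e(1)=(1,\dots,1)^{\mathsf T}$ and $e'=(0,1,2,\dots,d-1)^{\mathsf T}$. The identities \eqref{eq:sqrt-plus} and \eqref{eq:sqrt-minus} at $z=1$, with $H(1)=0$, give $B_0^{\mathsf T}e(1)=B_1^{\mathsf T}e(1)=\tfrac12H(-1)e(-1)$. Differentiating in $z$ and using $H'(1)=0$ gives a linear relation among $B_\eta^{\mathsf T}e(1)$, $B_\eta^{\mathsf T}e'$ and $e(-1)$. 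From these, $B_0^{\mathsf T}-B_1^{\mathsf T}$ annihilates $e(1)$. Also, $B_0^{\mathsf T}B_1^{-\mathsf T}=C$ fixes the vector $w=B_1^{\mathsf T}e(1)$, while the derivative relation makes $e(1)$ and a companion vector form a Jordan-type chain, i.e.\ a generalized eigenvector of some product that is not an eigenvector. This is impossible for matrices that are simultaneously similar to orthogonal ones, because every product would then be diagonalizable.

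As a backup, I would compare norms in the $G$-inner product. The orthogonality would give $\|B_0^{\mathsf T}x\|_{G^{-1}}=\|B_1^{\mathsf T}x\|_{G^{-1}}=\|x\|_{G^{-1}}$ for all $x$. Applying this at $x=e(z^2)$ with $|z|=1$, \eqref{eq:sqrt-plus} and \eqref{eq:sqrt-minus} would yield a weighted energy identity of the form $|H(z)|^2\|e(z)\|^2+|H(-z)|^2\|e(-z)\|^2=4\|e(z^2)\|^2$. Expanding this to second order at $z=1$ and using the double zero together with the endpoint coefficients $h_0h_d\neq0$ should produce the contradiction. The detailed bookkeeping in this last step is where I expect the real difficulty.
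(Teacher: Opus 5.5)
Your argument up to common orthogonality is correct and follows the paper: rigidity, $\beta=0$ from \eqref{eq:normalizations}, the closed semigroup, Theorem~\ref{thm:PV}, and the transfer from $K_0,K_1$ to $B_0,B_1$ via \eqref{eq:reversal}. The gap is the decisive last step, which you leave open.

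\textbf{The first attempt rests on a sign error.} By \eqref{eq:sqrt-plus} at $z=1$ you have $B_0^{\mathsf T}e(1)+B_1^{\mathsf T}e(1)=H(1)e(1)=0$. Since $H(-1)=A(1)=2$, this gives $B_0^{\mathsf T}e(1)=-B_1^{\mathsf T}e(1)=e(-1)$. So $B_0^{\mathsf T}+B_1^{\mathsf T}$, not the difference, kills $e(1)$, and $C$ sends $w=B_1^{\mathsf T}e(1)$ to $-w$ rather than fixing it. The Jordan chain is then only asserted, and it does not run through $e(1)$.

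\textbf{The backup cannot succeed.} With $G=I$ your weighted identity is just $d$ times \eqref{eq:qmf}. It therefore holds whether or not $B_0,B_1$ are orthogonal, and expanding it at $z=1$ to any order yields no contradiction.

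\textbf{The paper's missing idea.} This is the eigenvalue test you dismissed, applied to the right rows. The first row of $B_0$ is $h_0e_0^{\mathsf T}$ and the last row of $B_1$ is $h_de_{d-1}^{\mathsf T}$. Hence $h_0$ is an eigenvalue of $B_0$ and $h_d$ one of $B_1$, and unimodularity gives $|h_0|=|h_d|=1$. Then $\sum_k h_k^2=2$ forces $H(z)=h_0+h_dz^d$, and $H'(1)=dh_d\neq0$ contradicts \eqref{eq:double}.

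\textbf{Your chain idea can be repaired.} Put $w=B_1^{\mathsf T}e(1)=-e(-1)\neq0$ and $u=B_1^{\mathsf T}e'$. Differentiate \eqref{eq:sqrt-plus} at $z=1$ and use $H(1)=H'(1)=0$ to get $2B_0^{\mathsf T}e'+2u+w=0$, that is, $(C+I)u=-\tfrac12 w$. Together with $(C+I)w=0$, this shows $C$ has a Jordan block of size at least two at $-1$. But $C^{\mathsf T}=B_1^{-1}B_0$ is conjugate, by your common $S$, to an orthogonal matrix, so it is diagonalizable. It lies in the generated group rather than the semigroup, but the common conjugation covers inverses too.

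This would be a genuinely different ending from the paper's. It uses only the double zero and $H(-1)\neq0$, not the endpoint coefficients or the energy identity. As written, however, your proposal does not carry it out, so the contradiction is not established.
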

\begin{proof}
Suppose first that $P$ has a nondegenerate affine interval.
Proposition~\ref{prop:rigidity} gives
$P(s)=\log2+\beta s$ and $\spr(B_I)=e^{\beta|I|}$ for every word.
Since $P(2)=\log2$, the affine formula gives $\beta=0$.
Hence every finite product has spectral radius one. Set
\[
 \mathcal S_0=\{B_{i_n}\cdots B_{i_1}:n\ge1,\ i_j\in\{0,1\}\},
 \qquad \mathcal S=\overline{\mathcal S_0},
\]
with closure in the space of real $d\times d$ matrices.
Continuity of multiplication shows that $\mathcal S$ is a closed
semigroup, and continuity of the spectral radius gives $\spr(M)=1$ for
every $M\in\mathcal S$. It is irreducible because it contains the
irreducible pair $B_0,B_1$. Thus $\mathcal S$ is an irreducible real
matrix semigroup of constant spectral radius, with common value one as
in \cite[Definition~1]{PV}, and Theorem~\ref{thm:PV} applies to it: the
invertible generators $B_0,B_1\in\mathcal S$ are orthogonal in a common
basis. In particular all their eigenvalues have modulus one.

Let $e_0,\ldots,e_{d-1}$ be the standard coordinate vectors.
The first row of $B_0$ and the last row of $B_1$ give
\[
 e_0^TB_0=h_0e_0^T,\qquad e_{d-1}^TB_1=h_de_{d-1}^T.
\]
Thus $h_0,h_d$ belong to the spectra of $B_0,B_1$, respectively,
and $|h_0|=|h_d|=1$. Since $\sum_k h_k^2=2$,
all interior coefficients vanish. Thus $H(z)=h_0+h_dz^d$, whose
derivative at $1$ is $dh_d\ne0$. This contradicts \eqref{eq:double}.

If $Q$ has an affine interval, Proposition~\ref{prop:rigidity} gives
$Q(s)=\log4+\beta s$ and spectral radius $e^{\beta|I|}$ for each
word product. Since $Q(2)=\log4$, again $\beta=0$ and every finite
product has spectral radius one. As above, the closure $\mathcal S$ of
the semigroup $\mathcal S_0$ of finite products of $L_0,L_1,K_0,K_1$ is
an irreducible closed real semigroup with $\spr(M)=1$ for every
$M\in\mathcal S$. Theorem~\ref{thm:PV} makes $K_0,K_1$ orthogonal in a
common basis. Since $K_\eta=T_\eta(g)$ has the same structure as
$B_\eta$,
\[
 e_0^TK_0=g_0e_0^T,\qquad e_{d-1}^TK_1=g_de_{d-1}^T,
\]
so $g_0,g_d$ are eigenvalues of $K_0,K_1$, and $|g_0|=|g_d|=1$.
By \eqref{eq:supported-high}, this again gives $|h_0|=|h_d|=1$
and the same contradiction. Both convex pressures therefore have
no nondegenerate affine interval and hence are strictly convex:
equality at a nontrivial convex combination of two distinct points
would force affinity on the interval joining them.
\end{proof}

\begin{remark}
The same mechanism appears in \cite[Section~9.4, Proposition~14]{PV},
where constant spectral radius of an irreducible pair of refinement
transition matrices is shown to force all roots of the mask symbol onto
the unit circle. Here the endpoint coefficients and the energy identity
give a shorter contradiction with the double zero \eqref{eq:double}.
\end{remark}

\begin{samepage}
To turn the strict pressure inequality into a basis obstruction, we use
the fact that the coordinate projections of a Schauder basis are
uniformly bounded. For an orthonormal system, this gives the following
necessary condition, as used in \cite[Lemma~4.1]{NRMI} and
\cite[Lemma~3.1]{NZ}.

\begin{lemma}\label{lem:basis}
Let $(e_j)$ be an orthonormal system in $L^2(\R)$, with $e_j\in L^p(\R)\cap L^{p'}(\R)$, where $1\le p<\infty$ and
$p'$ is the conjugate exponent ($p'=\infty$ if $p=1$).
If this sequence is a Schauder basis of $L^p(\R)$, then
$\sup_j\norm{e_j}_p\norm{e_j}_{p'}<\infty$.
\end{lemma}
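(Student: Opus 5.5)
The plan is to use the uniform boundedness of the coordinate functionals of a Schauder basis and to identify them with the orthonormal coefficients. Suppose $(e_j)$ is a Schauder basis of $L^p(\R)$ and let $(e_j^*)$ be its biorthogonal coordinate functionals in the dual. By the basis constant theorem (uniform boundedness of the partial sum projections $S_n$), the projections $e_j^*(\cdot)e_j=S_j-S_{j-1}$ have norm at most $2K$, where $K=\sup_n\norm{S_n}$. Hence
\[
 \norm{e_j^*}\,\norm{e_j}_p\le 2K\quad\text{for all } j.
\]

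The main step is to show $\norm{e_j^*}\ge\norm{e_j}_{p'}$, i.e.\ that the functional $e_j^*$ agrees with $f\mapsto\int f\,\overline{e_j}$, at least on a dense set. For $f$ in the finite span of the $e_k$, biorthogonality gives $e_j^*(f)=\langle f,e_j\rangle$ by orthonormality. Since $(e_j)$ is a basis, this span is dense in $L^p$. The functional $f\mapsto\langle f,e_j\rangle$ is bounded on $L^p$ because $e_j\in L^{p'}$. Both functionals are continuous and agree on a dense subspace, so they coincide on $L^p$. By duality, the norm of $f\mapsto\int f\overline{e_j}$ on $L^p$ equals $\norm{e_j}_{p'}$; this also holds for $p=1$, $p'=\infty$, since $(L^1)^*=L^\infty$ isometrically. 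Therefore $\norm{e_j}_p\norm{e_j}_{p'}\le 2K$.

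I expect no real obstacle. The only point needing care is the identification of $e_j^*$ with the $L^2$ pairing. That argument must use density of the span in $L^p$, which comes from the basis property, rather than any density hypothesis from $L^2$. This is what lets the lemma hold for an arbitrary enumeration without the density hypothesis of \cite[Lemma~3.1]{NZ}.
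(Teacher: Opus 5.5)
Your proof is correct and follows essentially the same route as the paper. You bound the $j$th coordinate projection by $2K$ as a difference of partial-sum projections, identify the coordinate functional with integration against $e_j$ using density of the span in $L^p$ (not in $L^2$), and read off the rank-one norm $\norm{e_j}_p\norm{e_j}_{p'}$, including the case $p=1$.
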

\end{samepage}
\begin{proof}
Let $K$ bound the partial-sum projections. Integration against $e_j$
is a continuous functional on $L^p$ and agrees on the finite span
with the $j$th coordinate functional. Density of that span identifies
the two functionals on all of $L^p$. The $j$th coordinate projection is
\[
 f\longmapsto\left(\int_{\R}f(x)\overline{e_j(x)}\,dx\right)e_j.
\]
By the norm formula for rank-one operators it has norm
$\norm{e_j}_p\norm{e_j}_{p'}$, also for $p=1$, and it is a difference of two
partial-sum projections, so its norm is at most $2K$.
\end{proof}

\begin{proof}[Proof of Theorem~\ref{thm:main}]
For $r>2$, H\"older's inequality over the $2^n$ branch matrix words gives
\[
 \sum_{|I|=n}\norm{B_I}^2
 \le 2^{n(1-2/r)}\left(\sum_{|I|=n}\norm{B_I}^r\right)^{2/r}.
\]
Taking logarithmic limits and using $P(2)=\log2$ yields
$P(r)\ge\log2$. If equality held at some $p>2$, convexity would
give $P(r)\le\log2$ for $2\le r\le p$, forcing a constant interval.
Proposition~\ref{prop:noaffine} excludes this. Hence $P(p)>\log2$
for every $p>2$, and \eqref{eq:branch} proves part~(ii).
The same argument over $4^n$ words gives $Q(p)>\log4$ for $p>2$;
\eqref{eq:level} proves part~(iii).

For part~(i), let $1<p<\infty$, $p\ne2$, and $p'=p/(p-1)$.
Since $1/p+1/p'=1$ and
\[
 \frac1p\,p+\frac1{p'}\,p'=2,
\]
strict convexity at the distinct exponents $p,p'$ gives
\[
 \frac{P(p)}p+\frac{P(p')}{p'}>P(2)=\log2,
\]
that is, $\rho_p(\mathcal B)\rho_{p'}(\mathcal B)>2$. By
\eqref{eq:branch} at the two exponents,
\begin{equation}\label{eq:product-growth}
 \lim_{n\to\infty}
 \bigl(\norm{w_{2^n-1}}_p\norm{w_{2^n-1}}_{p'}\bigr)^{1/n}
 =\frac{\rho_p(\mathcal B)\rho_{p'}(\mathcal B)}{2}>1.
\end{equation}
Each of these packets belongs to $\mathcal W$, so
Lemma~\ref{lem:basis} excludes every Schauder-basis enumeration of
$\mathcal W$ in $L^p(\R)$.

For $p=1$, fix any $r\in(1,2)$ and let $r'$ be its conjugate exponent.
H\"older's inequality gives
$\norm{f}_r\le\norm{f}_1^{1/r}\norm{f}_\infty^{1/r'}$ and
$\norm{f}_{r'}\le\norm{f}_1^{1/r'}\norm{f}_\infty^{1/r}$, hence
$\norm{f}_r\norm{f}_{r'}\le\norm{f}_1\norm{f}_\infty$. By
\eqref{eq:product-growth} at the exponent $r$,
$\norm{w_{2^n-1}}_1\norm{w_{2^n-1}}_\infty\to\infty$, and
Lemma~\ref{lem:basis} with $p=1$ excludes every Schauder-basis
enumeration of $\mathcal W$ in $L^1(\R)$.
\end{proof}

\section{Further observations}\label{sec:further}

\subsection{Other spectral factors}
The Daubechies filter of length $2m$ is the extremal-phase spectral
factor of the Daubechies polynomial; other factors, such as the
least-asymmetric filters, have the same low-pass magnitude response
\cite[Section~6.4 and Chapter~8]{D92}. Their sign-alternated symbols
$H(z)=A(-z)$ satisfy \eqref{eq:magnitude}.

\newpage
\begin{samepage}
To extend the result to other spectral factors, we must check that the
common magnitude response preserves both the mask identities and the
scaling-function properties used above. The following proposition
verifies these requirements.

\begin{proposition}\label{prop:factors}
Let $m\ge2$, and let $A(z)=\sum_{k=0}^{2m-1}a_kz^k$ be a real polynomial
of degree $2m-1$ with $A(1)=2$ and $|A(e^{i\xi})|=|A_m(e^{i\xi})|$ for
all $\xi\in\R$, where $A_m$ is the symbol of the Daubechies filter of
length $2m$; equivalently, $H(z)=A(-z)$ satisfies \eqref{eq:magnitude}.
Let $\mathcal W_A$, $\mathcal B_A$ and $\mathcal T_A$ denote the packet
system and the transition families associated with the coefficients of
$A$. Then the conclusions of Theorem~\ref{thm:main} hold for
$\mathcal W_A$, with $\mathcal B$ and $\mathcal T$ replaced by
$\mathcal B_A$ and $\mathcal T_A$. In particular, the basis-failure and
mean-growth conclusions hold for the least-asymmetric filters.
\end{proposition}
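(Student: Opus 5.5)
The plan is to audit the proof of Theorem~\ref{thm:main} and isolate every property of the filter it actually uses, then check each property for a general spectral factor $A$. The proof used: (a) the normalization \eqref{eq:lowpass}, i.e.\ $\sum_k a_k=2$ and the QMF orthogonality, giving \eqref{eq:qmf}; (b) nonvanishing endpoints $a_0,a_d\ne0$; (c) \eqref{eq:magnitude}, hence \eqref{eq:double} and the fact that $1$ is the only unit-circle zero of $H$; (d) that $\phi_A$ is bounded and compactly supported with orthonormal integer translates, so that Lemma~\ref{lem:stability} applies and $\mathcal W_A$ is an orthonormal system with each packet of $L^2$ norm one. The latter gives \eqref{eq:normalizations}. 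Everything else is formal: Lemmas~\ref{lem:invertible}, \ref{lem:irreducible}, \ref{lem:cascade}, Corollary~\ref{cor:transfer}, Propositions~\ref{prop:rigidity} and \ref{prop:noaffine}, and Lemma~\ref{lem:basis} are stated or proved using only these inputs.

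First, (a) and (c) follow from the magnitude hypothesis. $A(1)=2$ fixes the sum. Since $|A(e^{i\xi})|^2=|A_m(e^{i\xi})|^2$ and the Daubechies symbol satisfies $|A_m(e^{i\xi})|^2+|A_m(-e^{i\xi})|^2=4$, the same holds for $A$. For real coefficients, $|A(e^{i\xi})|^2=A(z)A(z^{-1})$ on $|z|=1$, so this extends to the Laurent identity \eqref{eq:qmf}, which is equivalent to the even-autocorrelation condition in \eqref{eq:lowpass}. The identity \eqref{eq:magnitude} for $H=A(-z)$ is the hypothesis itself, so \eqref{eq:double} and the unit-circle zero structure hold verbatim.

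For (b), $\deg A=2m-1$ gives $a_d\ne0$. If $a_0=0$, then $A=zA_1$ with $\deg A_1\le d-1$ and $|A_1|=|A|$ on the circle. Then $A_1(z)A_1(z^{-1})$ would equal the autocorrelation Laurent polynomial of $A_m$, whose extreme coefficient $a_0^{(m)}a_d^{(m)}$ at $z^{\pm d}$ is nonzero. This is impossible for a polynomial of degree below $d$, so $a_0\ne0$.

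For (d), I would invoke the standard theory rather than reprove it. A finite real mask with \eqref{eq:lowpass} has a compactly supported $L^2$ scaling function, supported in $[0,d]$. Orthonormality of its translates holds under Cohen's condition, and here $|A(e^{i\xi})|$ vanishes on $[-\pi,\pi]$ only at $\xi=\pi$, which suffices. Boundedness (indeed continuity) follows from the Fourier decay of $\hat\phi_A$. That decay depends only on $|A|$, hence is the same as for $\phi_m$: for $m\ge2$ one gets $\hat\phi_A\in L^1$. The main obstacle is this step. Decay estimates use only $|\hat\phi|=\prod_j|A(e^{i\xi/2^j})|/2$, which coincides with $|\hat\phi_m|$, so $\hat\phi_A\in L^1$ because $\phi_m$ is H\"older continuous with exponent exceeding $1/2$ for $m\ge2$, giving $\hat\phi_m\in L^1$. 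Orthonormality of $\mathcal W_A$ in $L^2$ then follows from \cite{NZ}, since the packet construction only needs QMF filters and an orthonormal $\phi_A$.

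With (a)--(d) established, I would rerun the proof of Theorem~\ref{thm:main} line by line. The least-asymmetric filters are real spectral factors of the Daubechies polynomial with the same magnitude, so they are covered.
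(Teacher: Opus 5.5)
Your proposal is correct and is essentially the paper's proof. It follows the same steps: the audit of filter inputs, the QMF identity and unit-circle zero structure read off from $|A|$, and nonvanishing endpoints from the extreme coefficient at $z^{\pm(2m-1)}$ of $A(z)A(z^{-1})=A_m(z)A_m(z^{-1})$. It also uses Cohen's criterion for orthonormality and $|\hat\phi_A|=|\hat\phi_m|$ for boundedness.

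The only difference is in the boundedness step. You import $\hat\phi_m\in L^1$ from the known H\"older exponent $>1/2$ of $\phi_m$ (about $0.55$ for $m=2$) via a Bernstein-type lemma. The paper instead computes $\sup|\mathcal L|=\binom{2m-1}{m-1}^{1/2}<2^{m-1}$ and applies \cite[Lemma~7.1.1]{D92}, whose decay bound $|\hat\phi(\xi)|\le C(1+|\xi|)^{-1-\alpha}$ already gives integrability without that sharper regularity input.
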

\end{samepage}

\begin{proof}
About the filter, the proof of Theorem~\ref{thm:main} in
Sections~\ref{sec:masks}--\ref{sec:conclusion} uses only the properties
verified in (a) and (b) below.

(a) \emph{Algebraic properties.} The identities \eqref{eq:lowpass} are
$A(1)=2$ and $|A(e^{i\xi})|^2+|A(-e^{i\xi})|^2=4$, which are inherited
from $A_m$. The double zero \eqref{eq:double} and the absence of further
unit-circle zeros of $H$ are determined by $|A|$ on the unit circle. For
the endpoint coefficients, the Laurent polynomials $A(z)A(z^{-1})$ and
$A_m(z)A_m(z^{-1})$ agree on the unit circle and hence identically.
Comparing coefficients of $z^{2m-1}$ gives
\[
 a_{2m-1}a_0=[z^{2m-1}]\,A_m(z)A_m(z^{-1})\ne0,
\]
so both endpoint coefficients of $A$ are nonzero.

(b) \emph{Scaling-function properties.} The scaling function $\phi$ is
supported in $[0,2m-1]$. Write
\[
 m_0(\xi)=\frac{A(e^{-i\xi})}{2}
 =\left(\frac{1+e^{-i\xi}}{2}\right)^m\mathcal L(\xi),
\]
so that $|\mathcal L(\xi)|^2=P_m(\sin^2(\xi/2))$ with
$P_m(y)=\sum_{n=0}^{m-1}\binom{m-1+n}{n}y^n$. Then
\[
 \sup_{\xi\in[-\pi,\pi]}|\mathcal L(\xi)|=\max_{y\in[0,1]}P_m(y)^{1/2}=P_m(1)^{1/2}=\binom{2m-1}{m-1}^{1/2}<2^{m-1},
\]
where the inequality follows by induction from $3<4$ at $m=2$, the
ratio of consecutive binomial coefficients being $2(2m+1)/(m+1)<4$.
By \cite[Lemma~7.1.1]{D92}, $\sup_{\xi\in[-\pi,\pi]}|\mathcal L(\xi)|<2^{m-1-\alpha}$
implies $\phi\in C^\alpha$; its proof uses only $|m_0|$, since
$|\widehat\phi(\xi)|=(2\pi)^{-1/2}\prod_{j\ge1}|m_0(2^{-j}\xi)|$. Taking
$0<\alpha<m-1-\log_2\sup_{\xi\in[-\pi,\pi]}|\mathcal L(\xi)|$ gives $\phi\in C^\alpha$,
so the compactly supported function $\phi$ is bounded. For
orthonormality we use Cohen's criterion \cite[Corollary~6.3.2]{D92}: if
the trigonometric polynomial $m_0$ satisfies $m_0(0)=1$ and
$|m_0(\xi)|^2+|m_0(\xi+\pi)|^2=1$, and has no zeros in
$[-\pi/3,\pi/3]$, then the integer translates of $\phi$ are
orthonormal. The first two conditions are \eqref{eq:lowpass}. For the
third, the common magnitude response gives
\[
 |m_0(\xi)|^2=\cos^{2m}(\xi/2)\,P_m(\sin^2(\xi/2)),
\]
and $P_m>0$ on $[0,1]$, so $m_0$ vanishes on $[-\pi,\pi]$ only at
$\pm\pi$. Since the hypothesis of the criterion only concerns $|m_0|$,
no separate analysis of invariant cycles of $\xi\mapsto2\xi$ is
needed.

(c) \emph{Conclusion.} All hypotheses used in
Sections~\ref{sec:masks}--\ref{sec:conclusion} are verified by (a) and
(b), so the proof of Theorem~\ref{thm:main} applies without change to
$\mathcal W_A$, $\mathcal B_A$ and $\mathcal T_A$.
\end{proof}

The Coiflet filters have a different modulus and are not covered by
Proposition~\ref{prop:factors}; for them the unit-circle zero structure
of $H$ would have to be checked separately.

\subsection{The role of phase}
The numerical lower bounds in \cite[Section~3]{NRMI} differ for the
Daubechies and least-asymmetric filters of the same length, suggesting
that the phase of $m_0$ influences the behaviour of the packets in
$L^p(\R)$. This is consistent with the present results: the qualitative
conclusions of Theorem~\ref{thm:main} depend only on $|m_0|$, whereas
the growth rates, which for a spectral factor $A$ we denote by
$\gamma_{A,p}$ and $\Gamma_{A,p}$, are determined by the
transition matrices, which depend on the filter coefficients
themselves, and may therefore vary with the phase.

\subsection{The Haar case}
For the Haar filter ($m=1$), $H(z)=1-z$ and the branch matrices are
the scalars $B_0=1$, $B_1=-1$. The pressure is constant, and the
double-zero obstruction fails. This is consistent with Paley's theorem
\cite{Paley} that the Walsh system is a Schauder basis of $L^p$,
$1<p<\infty$, and with $\rho_p(\mathcal T)=4^{1/p}$
\cite[Example~1]{NZ}.


\begin{thebibliography}{99}
\bibitem{CMW} R.~Coifman, Y.~Meyer, and M.~V.~Wickerhauser,
\emph{Size properties of wavelet packets}, in: M.~B.~Ruskai et al.\
(Eds.), Wavelets and Their Applications, Jones and Bartlett, Boston,
1992, pp.~453--470.
\bibitem{D88} I.~Daubechies, \emph{Orthonormal bases of compactly supported
wavelets}, Comm. Pure Appl. Math. \textbf{41} (1988), 909--996.
\href{https://doi.org/10.1002/cpa.3160410705}{doi:10.1002/cpa.3160410705}.
\bibitem{D92} I.~Daubechies, \emph{Ten Lectures on Wavelets}, SIAM,
Philadelphia, 1992.
\bibitem{Fan} A.~H.~Fan, \emph{Moyenne de localisation fr\'equentielle
des paquets d'ondelettes}, Rev. Mat. Iberoamericana \textbf{14} (1998),
63--70.
\bibitem{FK} D.-J.~Feng and A.~K\"aenm\"aki, \emph{Equilibrium states of the
pressure function for products of matrices}, Discrete Contin. Dyn. Syst.
\textbf{30} (2011), 699--708.
\bibitem{NACHA} M.~Nielsen, \emph{On a conjecture about Schauder-basis
properties of the Daubechies wavelet packets}, Appl. Comput. Harmon.
Anal. (2026), Paper No.~101938.
\href{https://doi.org/10.1016/j.acha.2026.101938}{doi:10.1016/j.acha.2026.101938}.
\bibitem{NRMI} M.~Nielsen, \emph{Size properties of wavelet packets
generated using finite filters}, Rev. Mat. Iberoamericana \textbf{18}
(2002), 249--265.
\bibitem{NZ} M.~Nielsen and D.-X.~Zhou, \emph{Mean size of wavelet packets},
Appl. Comput. Harmon. Anal. \textbf{13} (2002), 22--34.
\bibitem{Paley} R.~E.~A.~C.~Paley, \emph{A remarkable system of
orthogonal functions}, Proc. London Math. Soc. \textbf{34} (1932),
241--279.
\bibitem{P06} V.~Protasov, \emph{Refinement equations and corresponding
linear operators}, Int. J. Wavelets Multiresolut. Inf. Process.
\textbf{4} (2006), 461--474.
\href{https://doi.org/10.1142/S0219691306001385}{doi:10.1142/S0219691306001385}.
\bibitem{PV} V.~Yu.~Protasov and A.~S.~Voynov, \emph{Matrix semigroups
with constant spectral radius}, Linear Algebra Appl. \textbf{513} (2017),
376--408. \href{https://arxiv.org/abs/1407.6568}{arXiv:1407.6568}, Theorem~2.
\bibitem{Saliani} S.~Saliani, \emph{The solution of a problem of
Coifman, Meyer, and Wickerhauser on wavelet packets}, Constr. Approx.
\textbf{33} (2011), 15--39.
\bibitem{Sere} E.~S\'er\'e, \emph{Localisation fr\'equentielle des
paquets d'ondelettes}, Rev. Mat. Iberoamericana \textbf{11} (1995),
334--354.
\end{thebibliography}
\end{document}